\documentclass{amsart}

\usepackage{amsfonts}
\usepackage{graphicx}
\usepackage{amsmath}
\usepackage{amssymb}
\usepackage[english]{babel}
\usepackage{tikz-cd}
\usepackage[hidelinks]{hyperref}

\newcommand{\Set}{\mathsf{Set}}
\newcommand{\Vect}{\mathsf{Vect}}
\newcommand{\Grp}{\mathsf{Grp}}
\newcommand{\HopfAlg}{\mathsf{HopfAlg}}
\newcommand{\VectK}{\Vect_K}

\newtheorem{theorem}{Theorem}[section]

\newtheorem{proposition}{Proposition}[section]

\newtheorem{definition}{Definition}[section]

\newtheorem{example}{Example}[section]

\begin{document}

\title{The Grouplike Functor and Monoidal Adjunctions}

\author{Frank Murphy-Hernandez}
\address{Facultad de Ciencias, UNAM, Mexico City}
\email{murphy@ciencias.unam.mx}

\subjclass[2020]{Primary 18D10, 16W30; Secondary 18A40, 18D35}

\date{\today}
\keywords{Categorification, Hopf monoids, braided monoidal categories, grouplike elements, monoidal functors, adjoint functors}

\begin{abstract}
We construct a categorification of the functor of grouplike elements of a Hopf algebra in the setting of braided monoidal categories. Given a braided bistrong monoidal functor admitting a right adjoint, we define a functor assigning to each Hopf monoid a group object obtained as a subobject of its underlying object by means of finite limits. We prove that this construction is functorial and provides a right adjoint to the induced free Hopf monoid functor, thereby extending the classical correspondence between groups and grouplike elements of Hopf algebras to a categorical setting.
\end{abstract}

\maketitle

%%%%%%%%%%%%%%%%%%%%%%%%%%%%%%%%%%%%%%%%%%%%%%%%%%%%%%%%%%%%%%%%%%%%
%%%%%%%%%%%%%%%%%%%%%%%%%%%%%%%%%%%%%%%%%%%%%%%%%%%%%%%%%%%%%%%%%%%%
\section*{Introduction}

The theory of Hopf algebras occupies a central position in modern algebra, with deep connections to algebraic groups, quantum groups, representation theory, and combinatorics. One of the most elementary yet conceptually rich constructions associated with a Hopf algebra is the set of its grouplike elements, namely those elements $x$ satisfying
\[
\Delta(x)=x\otimes x,\qquad \varepsilon(x)=1.
\]
These elements form a group under the multiplication of the Hopf algebra, establishing a natural bridge between Hopf algebra theory and group theory. This construction defines the classical functor
\[
G:\HopfAlg_K\longrightarrow\Grp,
\]
which is right adjoint to the group algebra functor
\[
K[-]:\Grp\longrightarrow\HopfAlg_K.
\]
Grouplike elements play a fundamental role throughout Hopf algebra theory, appearing in the study of representations, algebraic groups, quantum groups, and combinatorial Hopf algebras, where they encode the invertible symmetries of the underlying coalgebraic structure \cite{Sweedler,Montgomery,Abe}.

A categorical framework for studying Hopf algebras beyond the classical setting of vector spaces was developed by Porst in \cite{FT}, where Hopf monoids were introduced in arbitrary braided monoidal categories. One of the fundamental observations of this theory is that Hopf monoids provide the appropriate categorical analogue of groups: indeed, Hopf monoids in the cartesian monoidal category of sets are precisely groups. This viewpoint explains many familiar properties of Hopf algebras as manifestations of more general categorical phenomena and provides a natural setting in which classical constructions can be reconsidered from a broader perspective.

From the classical point of view, the adjunction
\[
K[-]\dashv G
\]
is usually regarded as a specific feature of Hopf algebras over a field. A closer inspection, however, reveals that this adjunction originates from the free--forgetful adjunction
\[
K^{(-)}:\Set\rightleftarrows\VectK:U,
\]
whose left adjoint is naturally bistrong monoidal. Since this adjunction lifts to the corresponding categories of Hopf monoids, and since
\[
\mathsf{Hopf}(\Set)\cong\Grp,
\]
the classical group algebra--grouplike elements adjunction appears as the special case of a more general categorical construction. This observation naturally raises the question of whether such a construction exists for arbitrary braided monoidal categories.

One of the guiding principles of modern category theory is \emph{categorification}: the systematic replacement of algebraic structures by their categorical analogues. Since the pioneering work of Crane and Frenkel \cite{CraneFrenkel}, categorification has become a powerful tool in representation theory, topology, and quantum algebra, revealing conceptual relationships that remain hidden at the level of sets. Motivated by the preceding observation, we ask whether the classical grouplike elements functor admits a categorical analogue arising from an arbitrary bistrong monoidal adjunction.

The purpose of this paper is to answer this question affirmatively. Given a braided bistrong monoidal functor admitting a right adjoint, we construct a functor assigning to every Hopf monoid a group object obtained as a finite-limit subobject of its underlying object. We prove that this assignment is functorial and that it is right adjoint to the induced free Hopf monoid functor. Consequently, the classical adjunction between groups and Hopf algebras is recovered as the particular case arising from the free--forgetful adjunction between sets and vector spaces. Our main result (Proposition~\ref{prop:grouplike_adjunction}) therefore shows that the group algebra--grouplike elements correspondence is an instance of a general categorical principle rather than a phenomenon specific to Hopf algebras over a field.

The principal contributions of this paper are the following.
\begin{itemize}
    \item We identify the classical group algebra--grouplike elements adjunction as the lift of the free--forgetful adjunction between sets and vector spaces to the corresponding categories of Hopf monoids.
    \item We show that this phenomenon is not specific to vector spaces, but holds for arbitrary braided bistrong monoidal adjunctions satisfying suitable hypotheses.
    \item We construct the corresponding grouplike object functor and prove that it is right adjoint to the induced free Hopf monoid functor.
    \item We recover the classical adjunction between groups and Hopf algebras as a special case of the general theory.
\end{itemize}

The paper is organised as follows. In Section~\ref{sec:braided} we recall the basic notions of monoidal and braided categories, establishing the categorical framework used throughout the paper. Section~\ref{sec:monoids} introduces monoids internal to a monoidal category, and Section~\ref{sec:bimonoids} introduces their dual notion of comonoids together with bimonoids. Section~\ref{sec:hopf_monoids_final} then introduces Hopf monoids, the central objects of our study. Section~\ref{sec:monoidal_functors} develops the theory of lax, colax, bilax, and bistrong monoidal functors, culminating in Kelly's doctrinal adjunction theorem \cite{DA}, which provides the mechanism for transporting monoidal structures across adjunctions. In Section~\ref{sec:grouplike} we construct the grouplike object functor and establish the main adjunction theorem. Finally, Section~\ref{sec:examples} illustrates the theory through several examples, including the classical adjunction between groups and Hopf algebras, Hopf monoids in monoidal model categories, and Hopf monoids in the category of species.

Throughout the paper we assume familiarity with basic category theory, including limits, adjunctions, and monoidal categories. For the classical theory of Hopf algebras we refer to \cite{Sweedler,Abe}, while the monoidal aspects of Hopf monoids are treated comprehensively in the monograph of Aguiar and Mahajan \cite{MF} and in the standard categorical references \cite{borceux1994handbook}.

%%%%%%%%%%%%%%%%%%%%%%%%%%%%%%%%%%%%%%%%%%%%%%%%%%%%%%%%%%%%%%%%%%%%
\section{Preliminaries}\label{sec:preliminaries}

Throughout this paper, $K$ denotes a field. We begin by fixing the categorical framework that will serve as our ambient setting. Let $\VectK$ denote the category of $K$-vector spaces and $K$-linear transformations; for a $K$-vector space $V$ and a set $X$, we write $V^{(X)} := \bigoplus_{x\in X} V$ for the direct sum of $|X|$ copies of $V$. The category $\VectK$ is symmetric monoidal under the usual tensor product $\otimes_K$, with unit object $K$. For the general theory of categories and their basic constructions, we refer the reader to \cite{borceux1994handbook}.

In addition to vector spaces, we shall make use of two other standard categories. We denote by $\Set$ the category of sets and functions, which is cartesian monoidal with the cartesian product as tensor product and the singleton set $\{ * \}$ as unit object. We also denote by $\Grp$ the category of groups and group homomorphisms.

Finally, we turn to the main algebraic structures of interest. Let $\HopfAlg_K$ denote the category of $K$-Hopf algebras and Hopf algebra morphisms. For the classical theory of Hopf algebras, we follow the standard treatments of \cite{Sweedler} and \cite{Abe}; for the monoidal and functorial aspects that underlie our categorical approach, we also refer to the comprehensive monograph \cite{MF}. We recall that a Hopf algebra is both a unital associative algebra and a counital coassociative coalgebra, with the additional structure of an antipode, all compatible in the usual sense.

The categorical generalisations of these structures will be developed in the sections that follow. In particular, we shall work in an arbitrary monoidal category $(\mathcal{C}, \otimes, \mathbf{1})$, where the associativity and unit constraints are understood to be coherent via the triangle and pentagon identities. When the monoidal structure is braided, we denote the braiding by $\beta_{X,Y} \colon X \otimes Y \to Y \otimes X$, satisfying the hexagon identities.
%%%%%%%%%%%%%%%%%%%%%%%%%%%%%%%%%%%%%%%%%%%%%%%%%%%%%%%%%%%%%%%%%%%%
\section{Braided Monoidal Categories}\label{sec:braided}

We now recall the fundamental notions of monoidal and braided categories, which will underlie all subsequent constructions. A comprehensive exposition of braided monoidal categories can be found in the paper of Joyal and Street \cite{BT}, as well as in the book of Etingof et al. \cite{TC} and the work of Yetter \cite{QG}. For the general categorical background on monoidal structures and coherence, we refer the reader to \cite{borceux1994handbook}.

\begin{definition}\label{def:monoidal}
A monoidal category is a sextuple $(\mathcal{C},\otimes,\alpha,\mathbf{1},\lambda,\rho)$ consisting of a category $\mathcal{C}$, a functor $\otimes\colon \mathcal{C}\times \mathcal{C}\longrightarrow \mathcal{C}$ called the tensor product, an object $\mathbf{1}$ in $\mathcal{C}$ called the tensor unit, a natural isomorphism 
\[
\alpha\;\colon\;((-)\otimes(-))\otimes(-)\longrightarrow (-)\otimes ((-)\otimes(-))
\]
called the associator, a natural isomorphism $\lambda\colon \mathbf{1}\otimes(-)\longrightarrow (-)$ called the left unitor, and a natural isomorphism $\rho\colon (-)\otimes\mathbf{1}\longrightarrow (-)$ called the right unitor. These data are required to satisfy the triangle identity, namely,
\[
\begin{tikzcd}
 & X\otimes Y\\
(X\otimes\mathbf{1})\otimes Y  \arrow{ur}{\rho_X\otimes 1_Y} \arrow{rr}{\alpha_{X,\mathbf{1},Y}} &&X\otimes(\mathbf{1}\otimes Y)\arrow[swap]{ul}{1_X\otimes\lambda_Y}
\end{tikzcd}
\label{eq:triangle}
\]
for any objects $X,Y$ in $\mathcal{C}$, and the pentagon identity,
\[
\begin{tikzcd}
 & (W\otimes X)\otimes (Y\otimes Z)\arrow{dr}{\alpha_{W,X,Y\otimes Z}}\\
((W\otimes X)\otimes Y)\otimes Z  \arrow{ur}{\alpha_{W\otimes X,Y,Z}} \arrow[swap]{d}{\alpha_{W,X,Y}\otimes 1_Z} && (W\otimes(X\otimes(Y\otimes Z))\\
(W\otimes(X\otimes Y))\otimes Z \arrow{rr}{\alpha_{W,X\otimes Y,Z}}&& W\otimes((X\otimes Y)\otimes Z)\arrow[swap]{u}{1_W\otimes \alpha_{X,Y,Z}}
\end{tikzcd}
\label{eq:pentagon}
\]
for all objects $W,X,Y,Z$ in $\mathcal{C}$.
\end{definition}

In the sequel, we shall simply write $X\otimes Y$ for the tensor product of two objects $X$ and $Y$, omitting the associativity parentheses whenever no confusion can arise.

\begin{definition}\label{def:braided}
A braided category is a septuple $(\mathcal{C},\otimes,\alpha,\mathbf{1},\lambda,\rho,\beta)$, where $(\mathcal{C},\otimes,\alpha,\mathbf{1},\lambda,\rho)$ is a monoidal category equipped with a natural isomorphism $\beta_{X,Y}\colon X\otimes Y\longrightarrow Y\otimes X$ for all objects $X,Y$ in $\mathcal{C}$, called the braiding, satisfying the following two hexagon identities:
\[
\begin{tikzcd}
 & Y\otimes X\otimes Z\arrow{dr}{1_Y\otimes\beta_{X, Z}}\\
X\otimes Y\otimes Z  \arrow{ur}{\beta_{X,Y}\otimes 1_Z} \arrow{rr}{\beta_{X,Y\otimes Z}} && Y\otimes Z\otimes X
\end{tikzcd}
\label{eq:hexagon1}
\]
and
\[
\begin{tikzcd}
 & X\otimes Z\otimes Y\arrow{dr}{\beta_{X, Z}\otimes 1_Y}\\
X\otimes Y\otimes Z  \arrow{ur}{1_X\otimes \beta_{Y,Z}} \arrow{rr}{\beta_{X\otimes Y,Z}} && Z\otimes X\otimes Y
\end{tikzcd}
\label{eq:hexagon2}
\]
for any objects $X,Y,Z$ in $\mathcal{C}$.
\end{definition}

These hexagon conditions ensure that the braiding is compatible with the monoidal structure, interchanging tensor factors in a coherent way.

\begin{definition}\label{def:symmetric}
Let $(\mathcal{C},\otimes,\alpha,\mathbf{1},\lambda,\rho,\beta)$ be a braided category. We call it a symmetric monoidal category if the braiding is self-inverse, that is, if $\beta_{Y,X}\circ\beta_{X,Y} = 1_{X\otimes Y}$ for every pair of objects $X,Y$ in $\mathcal{C}$.
\end{definition}

Having established the general framework, we now turn to the most fundamental example of a symmetric monoidal category, which will serve as a recurring motif throughout this work.

\begin{example}\label{ex:vect}
The category $\VectK$ of $K$-vector spaces is a symmetric monoidal category when equipped with the usual tensor product $\otimes_K$ and with the field $K$ itself as the tensor unit.
\end{example}

More generally, the following standard construction shows that any category with finite products inherits a symmetric monoidal structure in a canonical way.

\begin{proposition}\label{prop:cartesian_monoidal}
Let $\mathcal{C}$ be a category with finite products. Then $\mathcal{C}$ admits the structure of a symmetric monoidal category, with the categorical product serving as the tensor product. In particular, the braiding is given by the canonical swap morphism.
\end{proposition}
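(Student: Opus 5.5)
We build the structure directly from universal properties and check the coherence axioms by the uniqueness clause of the product. For every pair of objects we choose a product $X\times Y$ with projections $\pi_1,\pi_2$, and we choose a terminal object $\mathbf{1}$ as the empty product. We set $X\otimes Y:=X\times Y$. On morphisms, $f\otimes g:=f\times g=\langle f\pi_1,g\pi_2\rangle$. Functoriality of $\otimes$ follows because both $(f'f)\times(g'g)$ and $(f'\times g')(f\times g)$ have the same composites with the projections; uniqueness in the universal property then forces them to be equal.

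The constraints are defined as the canonical pairing maps:
\[
\alpha_{X,Y,Z}=\langle \pi_1\pi_1,\langle \pi_2\pi_1,\pi_2\rangle\rangle,\qquad
\lambda_X=\pi_2,\qquad \rho_X=\pi_1,\qquad \beta_{X,Y}=\langle \pi_2,\pi_1\rangle .
\]
Each is invertible.
\begin{itemize}
\item The inverse of $\alpha_{X,Y,Z}$ is the evident pairing in the other direction.
\item The inverse of $\lambda_X$ is $\langle !_X,1_X\rangle$, where $!_X\colon X\to\mathbf{1}$ is the unique map. Then $\lambda_X\langle !_X,1_X\rangle=1_X$ is immediate. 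The other composite is the identity because its two components are $!\,\pi_2=\pi_1$, which holds since $\mathbf{1}$ is terminal, and $\pi_2$.
\item The case of $\rho_X$ is symmetric.
\end{itemize}
Naturality of all four families is checked componentwise against the projections.

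All the coherence diagrams are then handled by one principle. A morphism into an iterated product is determined by its composites with the iterated projections onto the individual factors $W,X,Y,Z$. Such composites are simply projections, so two parallel maps built from $\alpha,\lambda,\rho,\beta$ agree whenever they send each factor to the same factor. On this basis:
\begin{itemize}
\item For the pentagon, both paths send $(w,x,y,z)$ to $(w,(x,(y,z)))$ factorwise.
\item For the triangle, both paths project onto $X$ and $Y$ in the same way. The copy of $\mathbf{1}$ causes no trouble because maps into $\mathbf{1}$ are unique.
\item For the hexagons, both paths realise the same permutation of factors.
\item Finally, $\beta_{Y,X}\beta_{X,Y}=\langle\pi_1,\pi_2\rangle=1$, so the braiding is a symmetry in the sense of Definition~\ref{def:symmetric}.
\end{itemize}

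The main obstacle is bookkeeping rather than ideas. The point to get right is the factorwise lemma: two morphisms into an iterated product that agree after every iterated projection are equal. This follows by induction on the bracketing, applying uniqueness of the induced map at each level. It must be stated carefully so that the pentagon and hexagon verifications reduce to tracking where each factor goes.
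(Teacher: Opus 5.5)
Your proposal is correct and takes essentially the same approach as the paper. You use the same data (pairing-defined associator, projection unitors with inverses $\langle !_X,1_X\rangle$ and $\langle 1_X,!_X\rangle$, and the swap $\langle \pi_2,\pi_1\rangle$ as braiding) and reduce every coherence axiom to the uniqueness of induced maps into products. Your factorwise lemma and your explicit checks of the unitor inverses and of $\beta_{Y,X}\beta_{X,Y}=1$ spell out the ``straightforward verification'' that the paper leaves to the reader.
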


\begin{proof}
We explicitly construct the required data. Let $1$ denote a terminal object of $\mathcal{C}$, which exists by the finite products assumption. For any objects $X$ and $Y$, we denote their product by $(X\times Y, p_X, p_Y)$, where $p_X \colon X\times Y \to X$ and $p_Y \colon X\times Y \to Y$ are the canonical projections.

Given two morphisms $f \colon Z \to X$ and $g \colon Z \to Y$ with common domain $Z$, the universal property of the product yields a unique morphism $\langle f, g \rangle \colon Z \to X\times Y$ (which the reader may recognise as the pairing operation) such that
\[
p_X \circ \langle f, g \rangle = f \qquad\text{and}\qquad p_Y \circ \langle f, g \rangle = g.
\]

We now define the monoidal structure. The tensor product functor $\times \colon \mathcal{C}\times\mathcal{C} \to \mathcal{C}$ acts on morphisms $f_1 \colon X_1 \to Y_1$ and $f_2 \colon X_2 \to Y_2$ by
\[
f_1 \times f_2 := \langle f_1 \circ p_{X_1},\; f_2 \circ p_{X_2} \rangle \colon X_1\times X_2 \longrightarrow Y_1\times Y_2.
\]
The associator $\alpha_{X,Y,Z} \colon (X\times Y)\times Z \to X\times(Y\times Z)$ is the canonical isomorphism given by
\[
\alpha_{X,Y,Z} := \big\langle p_X \circ p_{X\times Y},\; \langle p_Y \circ p_{X\times Y},\; p_Z \rangle \big\rangle,
\]
which intuitively re-associates the three product factors.

For the unitors, observe that the projections themselves serve as the natural isomorphisms
\[
\lambda_X \colon 1\times X \longrightarrow X, \quad \lambda_X := p_X,
\qquad\text{and}\qquad
\rho_X \colon X\times 1 \longrightarrow X, \quad \rho_X := p_X.
\]
Their inverses are given by $\lambda_X^{-1} = \langle !_X, 1_X \rangle$ and $\rho_X^{-1} = \langle 1_X, !_X \rangle$, where $!_X \colon X \to 1$ denotes the unique morphism to the terminal object.

Finally, the braiding $\tau_{X,Y} \colon X\times Y \to Y\times X$ is the swap morphism defined by
\[
\tau_{X,Y} := \langle p_Y, p_X \rangle.
\]

A straightforward verification, using the naturality of the projections and the uniqueness of the induced maps into products, shows that these data satisfy the triangle identity (Diagram~\ref{eq:triangle}), the pentagon identity (Diagram~\ref{eq:pentagon}), and the two hexagon identities (Diagrams~\ref{eq:hexagon1} and~\ref{eq:hexagon2}). Hence $(\mathcal{C},\times,\alpha,1,\lambda,\rho,\tau)$ is a symmetric monoidal category. For a detailed exposition of this standard construction, we refer the reader to \cite[Chapter~2]{borceux1994handbook}.
\end{proof}

Symmetric monoidal categories arising from finite products in this way are traditionally referred to as cartesian monoidal categories. Before exploring their structural properties, let us consider their most canonical instance.

\begin{example}\label{ex:set_cartesian}
The category of sets $\Set$ is a cartesian monoidal category when endowed with the usual cartesian product as tensor product and the singleton set $\{ * \}$ as tensor unit.
\end{example}

Having established this basic example, we now turn to a useful property of the diagonal morphism in any cartesian monoidal category.

\begin{proposition}\label{prop:diagonal_naturality}
Let $\mathcal{C}$ be a cartesian monoidal category, let $X$ be an object in $\mathcal{C}$, and let $f \colon X \to X$ be an endomorphism. Denote by $\Delta_X := \langle 1_X, 1_X \rangle \colon X \to X \times X$ the diagonal morphism. Then the following identity holds:
\[
\Delta_X \circ f = \tau_{X,X} \circ (f \times f) \circ \Delta_X.
\]
\end{proposition}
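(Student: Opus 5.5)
We verify the identity through the universal property of the product $X\times X$, as constructed in the proof of Proposition~\ref{prop:cartesian_monoidal}. Two morphisms $Z\to X\times X$ coincide exactly when their composites with both projections $p_1,p_2\colon X\times X\to X$ agree. So it suffices to compute $p_i\circ(\text{left side})$ and $p_i\circ(\text{right side})$ for $i=1,2$ and compare.

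For the left side we have $p_1\circ\Delta_X\circ f = 1_X\circ f = f$, and likewise $p_2\circ\Delta_X\circ f=f$. In other words, $\Delta_X\circ f=\langle f,f\rangle$; this is the usual precomposition rule $\langle g,h\rangle\circ k=\langle g\circ k,h\circ k\rangle$, which itself follows from uniqueness in the universal property.

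For the right side we use the explicit formulas from Proposition~\ref{prop:cartesian_monoidal}.
\begin{itemize}
\item First, $(f\times f)\circ\Delta_X=\langle f\circ p_1, f\circ p_2\rangle\circ\langle 1_X,1_X\rangle$. Its components are $f\circ p_i\circ\langle 1_X,1_X\rangle=f$, so $(f\times f)\circ\Delta_X=\langle f,f\rangle$.
\item Next, $\tau_{X,X}=\langle p_2,p_1\rangle$, hence $p_1\circ\tau_{X,X}=p_2$ and $p_2\circ\tau_{X,X}=p_1$.
\end{itemize}
Combining these, $p_1\circ\tau_{X,X}\circ\langle f,f\rangle=p_2\circ\langle f,f\rangle=f$, and similarly the second component is $f$. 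Both sides therefore have components $(f,f)$, and uniqueness gives the identity.

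There is no serious obstacle. The only point needing care is bookkeeping: for $X\times X$ the two projections must be distinguished as $p_1,p_2$ rather than both written $p_X$. Noting that $\tau_{X,X}\circ\langle f,f\rangle=\langle f,f\rangle$ makes clear that the swap acts trivially on the diagonal. In fact the identity holds for any morphism $f\colon X\to Y$, with $\tau_{Y,Y}$ and $\Delta_Y$ in place of $\tau_{X,X}$ and $\Delta_X$, and the same argument works verbatim.
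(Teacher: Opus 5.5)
Your proof is correct and matches the paper's argument: both check, via the universal property of $X\times X$, that each side composed with each projection equals $f$, using $p_1\circ\tau_{X,X}=p_2$, $p_2\circ\tau_{X,X}=p_1$ and $p_i\circ(f\times f)=f\circ p_i$. Your labelling of the projections as $p_1,p_2$ also cleans up the paper's notation, which writes $p_X$ and $p_Y$ for the two projections of $X\times X$.
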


\begin{proof}
By the universal property of the product, two morphisms into $X \times X$ are equal precisely when their compositions with both projections $p_X$ and $p_Y$ coincide. We begin by checking the first projection. For the left-hand side of the claimed identity, we have
\[
p_X \circ \Delta_X \circ f = 1_X \circ f = f.
\]
For the right-hand side, using the defining relations of the product and the swap morphism, we obtain
\[
p_X \circ \tau_{X,X} \circ (f \times f) \circ \Delta_X
= p_Y \circ (f \times f) \circ \Delta_X
= f \circ p_Y \circ \Delta_X
= f.
\]
The verification for the second projection $p_Y$ is completely analogous and yields the same result, namely $f$. Therefore, since both projections agree, the universal property of the product ensures that
\[
\Delta_X \circ f = \tau_{X,X} \circ (f \times f) \circ \Delta_X,
\]
as required.
\end{proof}

%%%%%%%%%%%%%%%%%%%%%%%%%%%%%%%%%%%%%%%%%%%%%%%%%%%%%%%%%%%%%%%%%%%%
\section{Monoids}\label{sec:monoids}

We now introduce the first algebraic structures that will occupy our attention: monoids internal to a monoidal category. For the general theory of monoids in monoidal categories, we follow the standard references \cite{MF} and \cite{borceux1994handbook}.

\begin{definition}\label{def:monoid}
A monoid in a monoidal category $\mathcal{C}$ is a triple $(M, m, u)$ consisting of an object $M$ of $\mathcal{C}$, a multiplication morphism $m \colon M \otimes M \longrightarrow M$, and a unit morphism $u \colon \mathbf{1} \longrightarrow M$, such that the following diagrams commute:
\[
\begin{tikzcd}
M\otimes M\otimes M\arrow{r}{1_M\otimes m} \arrow{d}{m\otimes 1_M}& M\otimes M\arrow{d}{m}\\
M\otimes M\arrow{r}{m} & M
\end{tikzcd}
\;\;\;\;\;
\begin{tikzcd}
\mathbf{1}\otimes M\arrow{r}{u\otimes 1_M}\arrow[swap]{dr}{\lambda_M} & M\otimes M\arrow{d}{m} & M\otimes \mathbf{1}\arrow[swap]{l}{1_M\otimes u}\arrow{dl}{\rho_M}\\
&M
\end{tikzcd}
\]
The first diagram encodes the associativity of the multiplication, while the second ensures that the unit behaves as a two-sided identity.
\end{definition}

This purely categorical definition recovers the classical algebraic notions in familiar settings, as the following examples illustrate.

\begin{example}\label{ex:monoid_set}
A monoid in the cartesian monoidal category $\Set$ (with the product as tensor product) is precisely an ordinary set-theoretic monoid, that is, a set equipped with an associative binary operation and a two-sided identity element.
\end{example}

\begin{example}\label{ex:monoid_vect}
Similarly, a monoid in the symmetric monoidal category $\VectK$ (with the usual tensor product $\otimes_K$) is exactly an associative unital $K$-algebra. Indeed, the multiplication $m \colon V \otimes_K V \to V$ corresponds to the algebra product, and the unit $u \colon K \to V$ selects the multiplicative identity element of the algebra.
\end{example}

Having defined the notion of a monoid internal to a monoidal category, we now turn to the structure-preserving maps between them, which will allow us to organise these objects into a category.

\begin{definition}\label{def:monoid_morphism}
Let $(M_1,m_1,u_1)$ and $(M_2,m_2,u_2)$ be monoids in a monoidal category $\mathcal{C}$. A morphism $f \colon M_1 \longrightarrow M_2$ in $\mathcal{C}$ is called a morphism of monoids if the following diagrams commute:
\[
\begin{tikzcd}
M_1\otimes M_1\arrow{r}{f\otimes f}\arrow{d}{m_1} & M_2\otimes M_2\arrow{d}{m_2}\\
M_1\arrow{r}{f} & M_2
\end{tikzcd}
\;\;\;\;\;
\begin{tikzcd}
M_1\arrow{rr}{f}&&M_2\\
&\mathbf{1}\arrow{ul}{u_1}\arrow[swap]{ur}{u_2}
\end{tikzcd}
\]
The first diagram expresses the compatibility of $f$ with the multiplications, while the second ensures that it preserves the unit.
\end{definition}

Monoids and their morphisms assemble into a category in the evident way.

\begin{definition}\label{def:mon_category}
For a monoidal category $\mathcal{C}$, the monoids over $\mathcal{C}$ together with the morphisms of monoids form a category, which we denote by $\mathsf{Mon}(\mathcal{C})$.
\end{definition}

With the category of monoids at hand, we next introduce the notion of a subobject in this setting, which will be essential for subsequent constructions.

\begin{definition}\label{def:submonoid}
Let $\mathcal{C}$ be a monoidal category, let $(M,m,u)$ be a monoid in $\mathcal{C}$, and let $i \colon N \longrightarrow M$ be a monomorphism in $\mathcal{C}$. We say that $(N,i)$ is a submonoid of $(M,m,u)$ if there exist unique morphisms $m' \colon N\otimes N \longrightarrow N$ and $u' \colon \mathbf{1} \longrightarrow N$ in $\mathcal{C}$ such that
\[
m \circ (i \otimes i) = i \circ m' \qquad\text{and}\qquad u = i \circ u'.
\]
In other words, the multiplication and the unit of $M$ factor uniquely through the subobject $N$.
\end{definition}

The following proposition shows that this definition is indeed coherent: every submonoid inherits a canonical monoidal structure, and the inclusion map is a morphism of monoids.

\begin{proposition}\label{prop:submonoid_inherits}
Let $\mathcal{C}$ be a monoidal category, let $(M,m,u)$ be a monoid in $\mathcal{C}$, and let $(N,i)$ be a submonoid of $(M,m,u)$ with corresponding structure morphisms $m'$ and $u'$. Then $(N,m',u')$ is a monoid in $\mathcal{C}$, and the inclusion $i \colon N \to M$ is a morphism of monoids.
\end{proposition}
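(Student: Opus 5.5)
The plan is to exploit the fact that $i$ is a monomorphism, and that $i\otimes i$ and $i\otimes i\otimes i$ intertwine the structure maps. Every axiom for $(N,m',u')$ will be checked by postcomposing with $i$, rewriting the result with the defining relations $i\circ m' = m\circ(i\otimes i)$ and $i\circ u' = u$, and then applying the corresponding axiom for $(M,m,u)$. Since $i$ is monic, equality after composing with $i$ gives equality in $N$. Once the axioms hold, the two diagrams of Definition~\ref{def:monoid_morphism} for $i$ are exactly the defining relations of a submonoid in Definition~\ref{def:submonoid}, so $i$ is a morphism of monoids with no further work.

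For associativity, I would compute $i\circ m'\circ(m'\otimes 1_N)$ using functoriality of $\otimes$. It equals $m\circ(i\otimes i)\circ(m'\otimes 1_N) = m\circ\big((i\circ m')\otimes i\big) = m\circ\big((m\circ(i\otimes i))\otimes i\big)$. This is $m\circ(m\otimes 1_M)\circ(i\otimes i\otimes i)$. Symmetrically,
\[
i\circ m'\circ(1_N\otimes m') = m\circ(1_M\otimes m)\circ(i\otimes i\otimes i),
\]
where the associator is suppressed as in the paper; if one keeps it, naturality of $\alpha$ gives $\alpha_{M,M,M}\circ((i\otimes i)\otimes i) = (i\otimes(i\otimes i))\circ\alpha_{N,N,N}$. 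Associativity of $m$ then makes the two expressions equal, and cancelling $i$ yields associativity of $m'$.

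For the left unit law, I would compute $i\circ m'\circ(u'\otimes 1_N) = m\circ\big((i\circ u')\otimes i\big) = m\circ(u\otimes 1_M)\circ(1_{\mathbf 1}\otimes i)$. By the unit axiom for $M$ this is $\lambda_M\circ(1_{\mathbf 1}\otimes i)$, and by naturality of $\lambda$ it equals $i\circ\lambda_N$. Cancelling $i$ gives $m'\circ(u'\otimes 1_N)=\lambda_N$. The right unit law is analogous, using $\rho$.

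The only mildly delicate point is the bookkeeping with the associator and unitors in the associativity and unit steps. This is handled by their naturality, applied as indicated above, so I do not expect a genuine obstacle. Note also that the uniqueness clause in Definition~\ref{def:submonoid} is never needed; it follows automatically from $i$ being monic.
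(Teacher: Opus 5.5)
Your proposal is correct and follows the paper's proof essentially step for step. Both arguments postcompose each axiom with the monomorphism $i$, rewrite via $i\circ m' = m\circ(i\otimes i)$, $i\circ u' = u$ and naturality of $\lambda$ (resp.\ $\rho$), invoke the corresponding axiom of $M$, and cancel $i$. Both also observe that the morphism-of-monoids claim is just the defining relations. Your explicit handling of the associator via naturality of $\alpha$, and your remark that the uniqueness clause of Definition~\ref{def:submonoid} is automatic since $i$ is monic, are minor refinements that the paper leaves implicit.
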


\begin{proof}
We first verify the associativity of $m'$. Since $M$ is a monoid, we know that $m \circ (1_M \otimes m) = m \circ (m \otimes 1_M)$. Using the defining equations of the submonoid, we compute:
\begin{align*}
i \circ m' \circ (1_N \otimes m')
&= m \circ (i \otimes i) \circ (1_N \otimes m') \\
&= m \circ (i \otimes (i \circ m')) \\
&= m \circ (i \otimes m \circ (i \otimes i)) \\
&= m \circ (1_M \otimes m) \circ (i \otimes i \otimes i) \\
&= m \circ (m \otimes 1_M) \circ (i \otimes i \otimes i) \\
&= m \circ (m \circ (i \otimes i) \otimes i) \\
&= m \circ ((i \circ m') \otimes i) \\
&= m \circ (i \otimes i) \circ (m' \otimes 1_N) \\
&= i \circ m' \circ (m' \otimes 1_N).
\end{align*}
Since $i$ is a monomorphism, we may cancel it on the left to obtain
\[
m' \circ (1_N \otimes m') = m' \circ (m' \otimes 1_N),
\]
which proves the associativity of $m'$.

We now check the unit axioms. By the naturality of the left unitor $\lambda$, we have $\lambda_M \circ (1_{\mathbf{1}} \otimes i) = i \circ \lambda_N$. Since $M$ is a monoid, we also have $\lambda_M = m \circ (u \otimes 1_M)$. Therefore, using $u = i \circ u'$, we obtain
\[
i \circ \lambda_N
= \lambda_M \circ (1_{\mathbf{1}} \otimes i)
= m \circ (u \otimes 1_M) \circ (1_{\mathbf{1}} \otimes i)
= m \circ ((i \circ u') \otimes i)
= m \circ (i \otimes i) \circ (u' \otimes 1_N)
= i \circ m' \circ (u' \otimes 1_N).
\]
Again, since $i$ is a monomorphism, we conclude that $\lambda_N = m' \circ (u' \otimes 1_N)$. The verification for the right unitor is entirely symmetric and yields $\rho_N = m' \circ (1_N \otimes u')$. Hence $(N,m',u')$ satisfies all the axioms of a monoid.

Finally, the equations $m \circ (i \otimes i) = i \circ m'$ and $u = i \circ u'$ are precisely the conditions that make $i$ a morphism of monoids, as required by Definition~\ref{def:monoid_morphism}. This completes the proof.
\end{proof}

\section{Comonoids and Bimonoids}\label{sec:bimonoids}

Having established the notion of monoids internal to a monoidal category, we now turn to their categorical duals: comonoids. This duality will ultimately allow us to define bimonoids and, subsequently, Hopf monoids. For the theory of comonoids and bimonoids in braided monoidal categories, we refer to \cite{MF} and \cite{BT}.

\subsection{Comonoids}

\begin{definition}\label{def:comonoid}
A comonoid in a monoidal category $\mathcal{C}$ is a triple $(C, d, e)$ consisting of an object $C$ of $\mathcal{C}$, a comultiplication morphism $d \colon C \longrightarrow C \otimes C$, and a counit morphism $e \colon C \longrightarrow \mathbf{1}$, such that the following diagrams commute:
\[
\begin{tikzcd}
C\arrow{r}{d} \arrow{d}{d}& C\otimes C\arrow{d}{1_C\otimes d}\\
C\otimes C\arrow{r}{d\otimes 1_C} & C\otimes C\otimes C
\end{tikzcd}
\;\;\;\;\;
\begin{tikzcd}
\mathbf{1}\otimes C & C\arrow{d}{d}\arrow[swap]{l}{\lambda^{-1}_C}\arrow{r}{\rho^{-1}_C} & C\otimes \mathbf{1}\\
& C\otimes C \arrow{ul}{e\otimes 1_C}\arrow[swap]{ur}{1_C\otimes e}
\end{tikzcd}
\]
The first diagram encodes the coassociativity of the comultiplication, while the second ensures that the counit behaves as a two-sided identity under the tensor product.
\end{definition}

As one might expect, every object in a cartesian monoidal category admits a canonical comonoid structure, given by the diagonal morphism.

\begin{example}\label{ex:comonoid_cartesian}
Let $\mathcal{C}$ be a cartesian monoidal category and let $C$ be an object in $\mathcal{C}$. Then $(C, d_C, e_C)$ is a comonoid, where $d_C := \langle 1_C, 1_C \rangle \colon C \to C \times C$ is the diagonal morphism, and $e_C \colon C \longrightarrow \mathbf{1}$ is the unique morphism to the terminal object.
\end{example}

In complete analogy with the case of monoids, we now define the structure-preserving maps between comonoids.

\begin{definition}\label{def:comonoid_morphism}
Let $(C_1, d_1, e_1)$ and $(C_2, d_2, e_2)$ be comonoids in a monoidal category $\mathcal{C}$. A morphism $f \colon C_1 \longrightarrow C_2$ in $\mathcal{C}$ is called a morphism of comonoids if the following diagrams commute:
\[
\begin{tikzcd}
C_1\arrow{r}{f}\arrow{d}{d_1} & C_2\arrow{d}{d_2}\\
C_1\otimes C_1\arrow{r}{f\otimes f} & C_2\otimes C_2
\end{tikzcd}
\;\;\;\;\;
\begin{tikzcd}
C_1\arrow{rr}{f}\arrow{dr}{e_1}&&C_2\arrow{dl}{e_2}\\
&\mathbf{1}
\end{tikzcd}
\]
The first diagram expresses the compatibility of $f$ with the comultiplications, while the second ensures that it preserves the counit.
\end{definition}

Comonoids and their morphisms naturally form a category.

\begin{definition}\label{def:com_category}
For a monoidal category $\mathcal{C}$, the comonoids in $\mathcal{C}$ together with the morphisms of comonoids form a category, which we denote by $\mathsf{Com}(\mathcal{C})$.
\end{definition}

\subsection{Bimonoids}

Combining the structures of a monoid and a comonoid in a compatible way yields the notion of a bimonoid. In the braided setting, this compatibility is expressed using the braiding $\beta$ to interchange tensor factors appropriately.

\begin{definition}\label{def:bimonoid}
Let $(\mathcal{C}, \otimes, \beta)$ be a braided monoidal category. A bimonoid in $\mathcal{C}$ is a quintuple $(B, m, u, d, e)$ where $B$ is an object of $\mathcal{C}$, $(B, m, u)$ is a monoid, $(B, d, e)$ is a comonoid, and the following four diagrams commute:
\[
\begin{tikzcd}
B\otimes B\otimes B\otimes B\arrow{rr}{1_B\otimes \beta\otimes 1_B}&&B\otimes B\otimes B\otimes B\arrow{d}{m\otimes m}\\
B\otimes B\arrow{r}{m}\arrow{u}{d\otimes d}& B\arrow{r}{d}& B\otimes B
\end{tikzcd}
\;\;\;\;\;
\begin{tikzcd}
\mathbf{1}\arrow{rr}{1_{\mathbf{1}}}\arrow{dr}{u}&&\mathbf{1}\\
&B\arrow{ur}{e}
\end{tikzcd}
\]
\[
\begin{tikzcd}
B\otimes B\arrow{r}{e\otimes e}\arrow{d}{m}&\mathbf{1}\otimes\mathbf{1}\arrow{d}{\lambda_{\mathbf{1}}}\\
B\arrow{r}{e}&\mathbf{1}
\end{tikzcd}
\;\;\;\;\;\;\;\;\;\;\;\;\;\;\;
\begin{tikzcd}
\mathbf{1}\arrow{r}{u}\arrow{d}{\lambda_{\mathbf{1}}^{-1}}&B\arrow{d}{d}\\
\mathbf{1}\otimes \mathbf{1}\arrow{r}{u\otimes u}& B\otimes B
\end{tikzcd}
\]
The first diagram encodes the compatibility between the multiplication and the comultiplication (often referred to as the bialgebra compatibility), the second requires the unit and counit to be mutually dual, and the last two ensure that the unit is a comonoid morphism and the counit is a monoid morphism, respectively.
\end{definition}

This construction specialises to a familiar setting: in any cartesian monoidal category, every monoid is automatically a bimonoid with the canonical comonoid structure.

\begin{example}\label{ex:bimonoid_cartesian}
Let $\mathcal{C}$ be a cartesian monoidal category and let $(M, m, u)$ be a monoid in $\mathcal{C}$. Then $(M, m, u, d, e)$ is a bimonoid, where $(M, d, e)$ is the canonical comonoid structure described in Example~\ref{ex:comonoid_cartesian}.
\end{example}

Finally, we introduce the morphisms and the category of bimonoids.

\begin{definition}\label{def:bimonoid_morphism}
Let $(B_1, m_1, u_1, d_1, e_1)$ and $(B_2, m_2, u_2, d_2, e_2)$ be bimonoids in a braided monoidal category $\mathcal{C}$. A morphism $f \colon B_1 \longrightarrow B_2$ in $\mathcal{C}$ is called a morphism of bimonoids if it is simultaneously a morphism of monoids and a morphism of comonoids, in the sense of Definitions~\ref{def:monoid_morphism} and~\ref{def:comonoid_morphism}.
\end{definition}

Bimonoids and their morphisms assemble into a category in the evident way.

\begin{definition}\label{def:bic_category}
For a braided monoidal category $\mathcal{C}$, the bimonoids in $\mathcal{C}$ together with the morphisms of bimonoids form a category, which we denote by $\mathsf{Bic}(\mathcal{C})$.
\end{definition}
\section{Hopf Monoids}\label{sec:hopf_monoids_final}

Having introduced bimonoids in the previous section, we now proceed to the central objects of our study: Hopf monoids. The key idea is to equip a bimonoid with an additional morphism—the antipode—which plays the role of an inverse under the convolution product. For the general theory of Hopf monoids, we refer to \cite{FT} and to the standard categorical references.

\subsection{The Convolution Monoid}

A fundamental construction in the theory of bimonoids is the convolution monoid, which endows the set of morphisms between a comonoid and a monoid with an associative multiplication.

\begin{definition}[Convolution Monoid]\label{def:convolution}
Let $\mathcal{C}$ be a monoidal category, let $(M, m, u)$ be a monoid in $\mathcal{C}$, and let $(C, d, e)$ be a comonoid in $\mathcal{C}$. We define a triple $\bigl(\operatorname{Hom}_{\mathcal{C}}(C, M), \, m^*, \, u^*\bigr)$, where the convolution product $m^*$ is given by
\[
m^*(f, g) := m \circ (f \otimes g) \circ d,
\]
for $f, g \in \operatorname{Hom}_{\mathcal{C}}(C, M)$, and the unit element $u^*$ is defined by the map
\[
u^* \colon \{*\} \longrightarrow \operatorname{Hom}_{\mathcal{C}}(C, M), \qquad u^*(*) := u \circ e.
\]
\end{definition}

As expected from the classical case, this convolution structure indeed forms an ordinary monoid in the category of sets.

\begin{proposition}\label{prop:convolution_monoid}
Let $\mathcal{C}$ be a monoidal category, let $(M, m, u)$ be a monoid in $\mathcal{C}$, and let $(C, d, e)$ be a comonoid in $\mathcal{C}$. Then $\bigl(\operatorname{Hom}_{\mathcal{C}}(C, M), \, m^*, \, u^*\bigr)$ is a monoid in $\Set$.
\end{proposition}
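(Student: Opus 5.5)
We verify the monoid axioms in $\Set$ directly, by writing both sides of each axiom as composites in $\mathcal{C}$ and reducing them to the axioms of $(M,m,u)$ and $(C,d,e)$. We also use functoriality of $\otimes$, that is $(f\otimes g)\circ(h\otimes k)=(f\circ h)\otimes(g\circ k)$, together with naturality of $\alpha$, $\lambda$ and $\rho$. As in the proof of Proposition~\ref{prop:submonoid_inherits}, we suppress the associator $\alpha$ and write $C\otimes C\otimes C$ and $M\otimes M\otimes M$ without parentheses. The naturality of $\alpha$ justifies this: $\alpha_{M,M,M}\circ((f\otimes g)\otimes h)=(f\otimes(g\otimes h))\circ\alpha_{C,C,C}$.

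\emph{Associativity.} For $f,g,h\in\operatorname{Hom}_{\mathcal{C}}(C,M)$, we first expand $m^*(m^*(f,g),h)$ by functoriality:
\[
m\circ\bigl((m\circ(f\otimes g)\circ d)\otimes h\bigr)\circ d=m\circ(m\otimes 1_M)\circ(f\otimes g\otimes h)\circ(d\otimes 1_C)\circ d.
\]
The expression $m^*(f,m^*(g,h))$ expands in the same way to $m\circ(1_M\otimes m)\circ(f\otimes g\otimes h)\circ(1_C\otimes d)\circ d$. Associativity of $m$ (Definition~\ref{def:monoid}) identifies the left factors, and coassociativity of $d$ (Definition~\ref{def:comonoid}) identifies the right factors. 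The middle term $f\otimes g\otimes h$ is shared, with the associators on each side moved past it by naturality.

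\emph{Unit laws.} We compute $m^*(u\circ e,f)=m\circ((u\circ e)\otimes f)\circ d$. Factoring through $\mathbf{1}\otimes\mathbf{1}$ and $\mathbf{1}\otimes C$ rewrites this as
\[
m\circ(u\otimes 1_M)\circ(1_{\mathbf{1}}\otimes f)\circ(e\otimes 1_C)\circ d.
\]
By the counit axiom, $(e\otimes 1_C)\circ d=\lambda_C^{-1}$. By the unit axiom, $m\circ(u\otimes 1_M)=\lambda_M$. The expression therefore becomes $\lambda_M\circ(1_{\mathbf{1}}\otimes f)\circ\lambda_C^{-1}$, which equals $f$ by naturality of $\lambda$. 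The right unit law $m^*(f,u\circ e)=f$ follows symmetrically, using $\rho$, the identity $(1_C\otimes e)\circ d=\rho_C^{-1}$ and the identity $m\circ(1_M\otimes u)=\rho_M$.

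No step is a genuine obstacle. The only point needing care is the bookkeeping in the associativity computation. We must insert the associators explicitly at the correct places. This amounts to using naturality of $\alpha$ once on the $C$ side and once on the $M$ side, so that both composites pass through the same canonically associated triple tensor product before the two axioms are applied.
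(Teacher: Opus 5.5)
Your proof is correct and takes essentially the same approach as the paper, which only remarks that associativity follows from associativity of $m$ and coassociativity of $d$, and the unit laws from the (co)unit axioms together with naturality of the unitors. Your computation, including the explicit use of naturality of $\alpha$ to carry both composites through the same triple tensor product, fills in that sketch.
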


The verification is a routine exercise in diagram chasing: associativity follows from the associativity of $m$ and the coassociativity of $d$, while the unit axioms rely on the naturality of the unitors.

\subsection{Hopf Monoids}

With the convolution monoid at our disposal, we can now define the notion of a Hopf monoid, which generalises the classical notion of a Hopf algebra to arbitrary braided monoidal categories.

\begin{definition}\label{def:hopf_monoid}
Let $\mathcal{C}$ be a braided monoidal category and let $(H, m, u, d, e)$ be a bimonoid in $\mathcal{C}$. We say that $H$ is a Hopf monoid if the identity morphism $\operatorname{id}_H$ is invertible in the convolution monoid $\operatorname{Hom}_{\mathcal{C}}(H, H)$. We call the inverse $s$ of $\operatorname{id}_H$ the antipode of $H$.
\end{definition}

This definition recovers the classical algebraic structures in the familiar settings, as the following examples illustrate.

\begin{example}\label{ex:hopf_group}
A Hopf monoid in the cartesian monoidal category $(\Set, \times)$ is precisely a group. Indeed, the comultiplication is given by the diagonal map, and the convolution inverse of the identity morphism corresponds to the group inversion map.
\end{example}

\begin{example}\label{ex:hopf_algebra}
A Hopf monoid in the symmetric monoidal category $(\VectK, \otimes_K)$ is exactly a classical $K$-Hopf algebra. For the foundational theory of Hopf algebras, we refer the reader to the standard texts of \cite{Sweedler} and \cite{Abe}.
\end{example}

We now establish an important functorial property of the antipode: it is preserved under morphisms of bimonoids.

\begin{proposition}\label{prop:antipode_functorial}
Let $\mathcal{C}$ be a braided monoidal category and let $f \colon H_1 \longrightarrow H_2$ be a morphism of bimonoids between Hopf monoids. Then $f$ sends the antipode of $H_1$ to the antipode of $H_2$; that is, if $s_1$ and $s_2$ denote the respective antipodes, then $f \circ s_1 = s_2 \circ f$.
\end{proposition}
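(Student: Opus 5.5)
The plan is the standard uniqueness-of-inverses argument in a convolution monoid. I work in the convolution monoid $\operatorname{Hom}_{\mathcal{C}}(H_1, H_2)$, built from the comonoid structure $(H_1,d_1,e_1)$ and the monoid structure $(H_2,m_2,u_2)$. By Proposition~\ref{prop:convolution_monoid} this is an ordinary monoid in $\Set$, with product $m^*(g,h)=m_2\circ(g\otimes h)\circ d_1$ and unit $u_2\circ e_1$. In any monoid a two-sided inverse is unique. It therefore suffices to show that $f$ is convolution invertible and that both $f\circ s_1$ and $s_2\circ f$ are two-sided convolution inverses of $f$. Uniqueness then forces $f\circ s_1 = s_2\circ f$.

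\textbf{Step 1: $f\circ s_1$ is a convolution inverse of $f$.} Since $f$ is a monoid morphism, $m_2\circ(f\otimes f)=f\circ m_1$. Hence
\[
m_2\circ(f\otimes (f\circ s_1))\circ d_1 = m_2\circ (f\otimes f)\circ(1_{H_1}\otimes s_1)\circ d_1 = f\circ m_1\circ(1_{H_1}\otimes s_1)\circ d_1 .
\]
The antipode identity in $H_1$ rewrites the right-hand side as $f\circ u_1\circ e_1$. Because $f$ preserves units, this equals $u_2\circ e_1$. The same computation with the factors in the other order gives the other side.

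\textbf{Step 2: $s_2\circ f$ is a convolution inverse of $f$.} Here I use that $f$ is a comonoid morphism, so $d_2\circ f=(f\otimes f)\circ d_1$. Then
\[
m_2\circ((s_2\circ f)\otimes f)\circ d_1 = m_2\circ(s_2\otimes 1_{H_2})\circ(f\otimes f)\circ d_1 = m_2\circ(s_2\otimes 1_{H_2})\circ d_2\circ f .
\]
The antipode identity in $H_2$ rewrites this as $u_2\circ e_2\circ f$. Counit preservation gives $e_2\circ f=e_1$, so the result is $u_2\circ e_1$. Again the other side is symmetric.

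The whole argument only uses functoriality of $\otimes$, i.e.\ the interchange law $(a\otimes b)\circ(c\otimes c') = (a\circ c)\otimes(b\circ c')$. The braiding plays no role, and no step is really an obstacle. The only point requiring care is to apply the correct half of the bimonoid-morphism hypothesis in each step: the monoid-morphism half in Step~1 and the comonoid-morphism half in Step~2. One should also confirm that uniqueness of inverses is used in $\operatorname{Hom}(H_1,H_2)$ and not in $\operatorname{Hom}(H_i,H_i)$. Uniqueness is then the elementary computation $a = a\cdot(b\cdot a') = (a\cdot b)\cdot a' = a'$, where $a,a'$ are inverses of $b$, and associativity there comes from Proposition~\ref{prop:convolution_monoid}.
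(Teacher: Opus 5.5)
Your proof is correct and rests on the same key idea as the paper: uniqueness of inverses in a convolution monoid. It is in fact the precise version of the paper's one-line sketch, which speaks loosely of $f$ inducing a morphism between the convolution monoids of $H_1$ and $H_2$ and sending the inverse of $\operatorname{id}_{H_1}$ to that of $\operatorname{id}_{H_2}$; as you point out, the comparison has to take place in the common monoid $\operatorname{Hom}_{\mathcal{C}}(H_1,H_2)$, where your Steps~1 and~2 show that both $f\circ s_1$ and $s_2\circ f$ are inverses of $f$.
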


\begin{proof}
This follows immediately from the uniqueness of inverses in the convolution monoid. Since $f$ is a morphism of bimonoids, it preserves both the multiplication and the comultiplication, and hence it induces a morphism between the convolution monoids. Consequently, it maps the inverse of $\operatorname{id}_{H_1}$ to the inverse of $\operatorname{id}_{H_2}$.
\end{proof}

Morphisms of bimonoids between Hopf monoids automatically preserve the antipode, so the full subcategory of Hopf monoids is well-defined.

\begin{definition}\label{def:hopf_category}
For a braided monoidal category $\mathcal{C}$, we denote by $\mathsf{Hopf}(\mathcal{C})$ the full subcategory of $\mathsf{Bic}(\mathcal{C})$ consisting of the Hopf monoids in $\mathcal{C}$.
\end{definition}

To conclude this section, we record a fundamental identity relating the antipode with the comultiplication and the braiding, which will be essential in subsequent computations.

\begin{proposition}\label{prop:antipode_braiding}
Let $(\mathcal{C}, \beta)$ be a braided monoidal category and let $(H, m, u, d, e, s)$ be a Hopf monoid in $\mathcal{C}$. Then the following identity holds:
\[
d \circ s = \beta_{H,H} \circ (s \otimes s) \circ d.
\]
\end{proposition}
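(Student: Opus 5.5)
The plan is the standard convolution-inverse argument: exhibit both sides of the identity as convolution inverses of one and the same element in a suitable convolution monoid, and then conclude by uniqueness of inverses in a monoid (Proposition~\ref{prop:convolution_monoid}).

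\textbf{Choice of convolution monoid.} Consider $\operatorname{Hom}_{\mathcal{C}}(H, H\otimes H)$, where $H$ carries its comonoid structure $(d,e)$. The target $H\otimes H$ carries the tensor-product monoid structure
\[
m_{H\otimes H} := (m\otimes m)\circ(1_H\otimes\beta_{H,H}\otimes 1_H), \qquad u_{H\otimes H} := (u\otimes u)\circ\lambda_{\mathbf{1}}^{-1}.
\]
That this is a monoid is a standard consequence of the hexagon identities and naturality of $\beta$. I will take this as routine, or note it as a lemma.

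\textbf{Step 1: $d$ is invertible and $d\circ s$ is a left inverse.} The bimonoid compatibility in Definition~\ref{def:bimonoid} says exactly that $d\circ m = m_{H\otimes H}\circ(d\otimes d)$, so $d$ is a monoid morphism. Hence
\[
m^*(d\circ s, d) = d\circ m\circ(s\otimes 1_H)\circ d = d\circ u\circ e = u_{H\otimes H}\circ e,
\]
using the antipode axiom and the axiom that $u$ is a comonoid morphism. Likewise $m^*(d, d\circ s)$ is the unit. So $d$ is invertible with inverse $d\circ s$.

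\textbf{Step 2: $\beta_{H,H}\circ(s\otimes s)\circ d$ is also an inverse of $d$.} It suffices to check one side, say
\[
m^*\bigl(d,\ \beta_{H,H}\circ(s\otimes s)\circ d\bigr) = u_{H\otimes H}\circ e,
\]
since a one-sided inverse of an invertible element equals its inverse. Expanding, the left side is
\[
(m\otimes m)\circ(1\otimes\beta\otimes 1)\circ(1\otimes 1\otimes \beta)\circ(1\otimes1\otimes s\otimes s)\circ(d\otimes d)\circ d.
\]
First use coassociativity to rewrite $(d\otimes d)\circ d$ as the fourfold comultiplication $d^{(3)}$. Next use naturality of $\beta$ and the hexagons to rearrange the braidings so that the innermost pair, the second and third legs of $d^{(3)}$, meets as $m\circ(1\otimes s)\circ d = u\circ e$. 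Then collapse with the counit and unitors. What remains is $(m\otimes 1)\circ(1\otimes \beta_{H,H})\circ(\ldots)$ applied to $(1\otimes u)$, which by naturality of $\beta$ with respect to $u$ and the unit axiom reduces to $m\circ(1\otimes s)\circ d$ on the outer legs, sent into $H\otimes H$ via $u\otimes u$. This is again $u_{H\otimes H}\circ e$.

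\textbf{Main obstacle.} Step 2 is the hard part: it requires correctly tracking the braidings, specifically showing via the hexagon identity that $(1\otimes\beta\otimes 1)\circ(1\otimes1\otimes\beta)$ can be written as $\beta_{H,H\otimes H}$ placed in the middle, so that naturality moves the middle pair together past the braiding. I would first try to carry this out with string diagrams, then translate the result back into equations. Uniqueness of inverses then yields $d\circ s = \beta_{H,H}\circ(s\otimes s)\circ d$.
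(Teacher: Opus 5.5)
Your proof is correct, and it supplies more than the paper does. The paper gives no argument for this identity: it only says that ``a direct diagram chase, using the compatibility between the multiplication and comultiplication'' works, and refers to Porst.

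Your route organises that chase through uniqueness of inverses in the convolution monoid $\operatorname{Hom}_{\mathcal{C}}(H,H\otimes H)$, with $H\otimes H$ carrying the braided tensor-product monoid structure. The bimonoid compatibility is then used exactly once, to say that $d$ is a monoid morphism, and this makes $d\circ s$ a two-sided inverse of $d$ with no computation. The only real diagram chase left is the one-sided identity of Step~2. The same template in $\operatorname{Hom}_{\mathcal{C}}(H\otimes H,H)$ gives the companion identity $s\circ m = m\circ\beta_{H,H}\circ(s\otimes s)$.

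Two details in Step~2 need fixing.

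First, the composite of braidings is mislabelled. By the second hexagon identity,
$(1_H\otimes\beta_{H,H}\otimes 1_H)\circ(1_H\otimes 1_H\otimes\beta_{H,H}) = 1_H\otimes\beta_{H\otimes H,H}$,
not $\beta_{H,H\otimes H}$ placed in the middle. The latter is a different map: in the symmetric case it sends the second leg to the end, whereas your composite sends the fourth leg to the second slot. The correct map $\beta_{H\otimes H,H}$ is natural in its first variable. That is what lets you slide $(1_H\otimes s)\otimes s$ through it, and then move $m\circ(1_H\otimes s)$, applied to legs two and three, in front of the braiding. There coassociativity turns it into $m\circ(1_H\otimes s)\circ d = u\circ e$.

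Second, the final collapse of $(1_H\otimes\beta_{H,H})\circ(1_H\otimes u\otimes 1_H)$ needs more than naturality of $\beta$ and ``the unit axiom''. You also need the coherence fact $\beta_{\mathbf{1},H} = \rho_H^{-1}\circ\lambda_H$, which follows from the hexagon and triangle axioms, together with the triangle identity. With these, the expression reduces to $(u\circ e)\otimes u$ up to unitors, i.e.\ to $u_{H\otimes H}\circ e$. Uniqueness of inverses then finishes the argument.
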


\begin{proof}
This is a standard consequence of the Hopf monoid axioms: it expresses the fact that the antipode is an anti-comultiplication morphism. A direct diagram chase, using the compatibility between the multiplication and comultiplication, yields the desired equality. For a detailed verification, we refer the reader to the formal treatment of Porst \cite{FT}.
\end{proof}

%%%%%%%%%%%%%%%%%%%%%%%%%%%%%%%%%%%%%%%%%%%%%%%%%%%%%%%%%%%%%%%%%%%%
\section{Monoidal Functors}\label{sec:monoidal_functors}

We now turn to the notion of monoidal functors, which provide the appropriate morphisms between monoidal categories. This notion is fundamental, as it allows us to transport monoids (and later Hopf monoids) from one category to another. For the general theory of monoidal functors, we refer to \cite{MF} and \cite{borceux1994handbook}.

\begin{definition}[Lax Monoidal Functor]\label{def:lax_monoidal}
Let $(\mathcal{C}_1, \otimes_1, \mathbf{1}_1)$ and $(\mathcal{C}_2, \otimes_2, \mathbf{1}_2)$ be monoidal categories, and let $F \colon \mathcal{C}_1 \longrightarrow \mathcal{C}_2$ be a functor. We define four functors: $F_2, F^2 \colon \mathcal{C}_1 \times \mathcal{C}_1 \longrightarrow \mathcal{C}_2$ and $F_0, F^0 \colon \mathbf{1} \longrightarrow \mathcal{C}_2$, where $\mathbf{1}$ denotes the terminal category (with one object and one morphism). These functors are given on objects by
\begin{align*}
F_2(X, Y) &:= F(X) \otimes_2 F(Y), \\
F^2(X, Y) &:= F(X \otimes_1 Y), \\
F_0(*) &:= F(\mathbf{1}_1), \\
F^0(*) &:= \mathbf{1}_2,
\end{align*}
for objects $X$ and $Y$ of $\mathcal{C}_1$.

The functor $F$ is called a lax monoidal functor if there exist natural transformations
\[
\phi \colon F_2 \longrightarrow F^2 \qquad\text{and}\qquad \phi_0 \colon F^0 \longrightarrow F_0
\]
such that the following diagrams commute for all objects $X, Y, Z$ in $\mathcal{C}_1$:

\[
\begin{tikzcd}
F(X)\otimes_2 F(Y)\otimes_2 F(Z)\arrow{rr}{1_{F(X)}\otimes_2\phi_{Y,Z}}\arrow{d}{\phi_{X,Y}\otimes_2 1_{F(Z)}}&&F(X)\otimes_2 F(Y\otimes_1 Z)\arrow{d}{\phi_{X,Y\otimes_1 Z}}\\
F(X\otimes_1 Y)\otimes_2 F(Z)\arrow{rr}{\phi_{X\otimes_1 Y,Z}}&&F(X\otimes_1 Y\otimes_1 Z)
\end{tikzcd}
\label{eq:lax_associativity}
\]

\[
\begin{tikzcd}
\mathbf{1}_2\otimes_2F(X)\arrow{d}{\phi_0\otimes_2 1_{F(X)}}&F(X)\arrow[swap]{l}{\lambda^{-1}_{2,F(X)}}\arrow{d}{F(\lambda^{-1}_{1,X})}\\
F(\mathbf{1}_1)\otimes_2F(X)\arrow{r}{\phi_{\mathbf{1}_1,X}}&F(\mathbf{1}_1\otimes_1 X)
\end{tikzcd}
\;\;\;\;\;
\begin{tikzcd}
F(X)\otimes_2\mathbf{1}_2\arrow{d}{1_{F(X)}\otimes_2 \phi_0}&F(X)\arrow[swap]{l}{\rho^{-1}_{2,F(X)}}\arrow{d}{F(\rho^{-1}_{1,X})}\\
F(X)\otimes_2F(\mathbf{1}_1)\arrow{r}{\phi_{X,\mathbf{1}_1}}&F(X\otimes_1 \mathbf{1}_1)
\end{tikzcd}
\label{eq:lax_unit}
\]

The first diagram expresses the compatibility of $\phi$ with the associativity constraints, while the two diagrams on the right encode the compatibility with the left and right unitors, respectively.
\end{definition}

This definition is precisely what is needed to ensure that lax monoidal functors preserve monoids, a fact we now establish.

\begin{proposition}\label{prop:lax_lifts_monoids}
Let $(\mathcal{C}_1, \otimes_1)$ and $(\mathcal{C}_2, \otimes_2)$ be monoidal categories, and let $F \colon (\mathcal{C}_1, \otimes_1) \longrightarrow (\mathcal{C}_2, \otimes_2)$ be a lax monoidal functor with structure maps $(\phi, \phi_0)$. Then $F$ induces a functor
\[
\overline{F} \colon \mathsf{Mon}(\mathcal{C}_1) \longrightarrow \mathsf{Mon}(\mathcal{C}_2).
\]
\end{proposition}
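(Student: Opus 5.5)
We define $\overline{F}$ explicitly on objects and morphisms and then check the axioms. For a monoid $(M,m,u)$ in $\mathcal{C}_1$ we set
\[
\overline{F}(M,m,u) := \bigl(F(M),\; F(m)\circ\phi_{M,M},\; F(u)\circ\phi_0\bigr),
\]
so the multiplication is $F(M)\otimes_2 F(M)\to F(M\otimes_1 M)\to F(M)$ and the unit is $\mathbf{1}_2\to F(\mathbf{1}_1)\to F(M)$. For a morphism of monoids $f$ we set $\overline{F}(f):=F(f)$. Since $\overline{F}$ acts as $F$ on underlying morphisms, preservation of identities and composition is inherited from $F$ once we know $\overline{F}$ is well defined.

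The main work is associativity of $\overline{m}:=F(m)\circ\phi_{M,M}$. We compute
\[
\overline{m}\circ(\overline{m}\otimes_2 1_{F(M)}) = F(m)\circ\phi_{M,M}\circ(F(m)\otimes_2 1)\circ(\phi_{M,M}\otimes_2 1).
\]
Naturality of $\phi$ in the first variable lets us move $F(m)\otimes_2 1$ past $\phi$. The term becomes
\[
F(m)\circ F(m\otimes_1 1_M)\circ\phi_{M\otimes M,M}\circ(\phi_{M,M}\otimes_2 1).
\]
By the lax associativity diagram (Diagram~\ref{eq:lax_associativity}) this equals
\[
F(m\circ(m\otimes 1))\circ\phi_{M,M\otimes M}\circ(1\otimes_2\phi_{M,M}).
\]
We then apply associativity of $m$ and run the same steps backwards, using naturality in the second variable, to reach $\overline{m}\circ(1\otimes_2\overline{m})$. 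The associators are suppressed as in the paper's convention; strictly, the lax associativity square involves $F(\alpha_1)$ and $\alpha_2$, and the argument then also uses naturality of $\alpha$. I expect this bookkeeping to be the main obstacle, and I would handle it by inserting the associators at exactly the corners of Diagram~\ref{eq:lax_associativity}.

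For the left unit law we write
\[
\overline{m}\circ(\overline{u}\otimes_2 1) = F(m)\circ\phi_{M,M}\circ(F(u)\otimes_2 1)\circ(\phi_0\otimes_2 1).
\]
Naturality of $\phi$ converts this to $F(m\circ(u\otimes 1))\circ\phi_{\mathbf{1}_1,M}\circ(\phi_0\otimes_2 1)$, which is $F(\lambda_{1,M})\circ\phi_{\mathbf{1}_1,M}\circ(\phi_0\otimes_2 1)$ by the unit law in $\mathcal{C}_1$. The left unit diagram of Definition~\ref{def:lax_monoidal} states that $\phi_{\mathbf{1}_1,M}\circ(\phi_0\otimes_2 1)=F(\lambda_{1,M}^{-1})\circ\lambda_{2,F(M)}$. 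Substituting, the whole composite equals $\lambda_{2,F(M)}$, as required. The right unit law is symmetric.

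It remains to check that $F(f)$ is a morphism of monoids whenever $f$ is. For multiplication, naturality of $\phi$ gives $F(f)\circ F(m_1)\circ\phi = F(m_2)\circ F(f\otimes f)\circ\phi = F(m_2)\circ\phi\circ(F(f)\otimes_2 F(f))$. For the unit, $F(f)\circ F(u_1)\circ\phi_0 = F(u_2)\circ\phi_0$ follows directly from $f\circ u_1=u_2$.
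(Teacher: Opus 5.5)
Your proposal is correct and follows essentially the same route as the paper: the same induced structure $\bigl(F(M), F(m)\circ\phi_{M,M}, F(u)\circ\phi_0\bigr)$, associativity via naturality of $\phi$ together with the lax associativity square, the unit laws via the lax unit diagrams, and functoriality via naturality of $\phi$. Your unit step correctly rewrites the left unit diagram as $\phi_{\mathbf{1}_1,M}\circ(\phi_0\otimes_2 1)=F(\lambda_{1,M}^{-1})\circ\lambda_{2,F(M)}$, and is cleaner than the paper's displayed chain, which drops the factor $F(\lambda_{1,M}^{-1})$ and writes $\lambda_{2,F(M)}^{-1}$ where $\lambda_{2,F(M)}$ is meant.
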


\begin{proof}
Let $(M, m, u)$ be a monoid in $\mathcal{C}_1$. We claim that
\[
\bigl( F(M),\; F(m) \circ \phi_{M,M},\; F(u) \circ \phi_0 \bigr)
\]
is a monoid in $\mathcal{C}_2$. To verify this, we must check the associativity and unit axioms.

For associativity, we need to show that the multiplication $m_F := F(m) \circ \phi_{M,M}$ satisfies
\[
m_F \circ (m_F \otimes_2 1_{F(M)}) = m_F \circ (1_{F(M)} \otimes_2 m_F).
\]
Indeed, using the naturality of $\phi$ and the first coherence diagram (Diagram~\ref{eq:lax_associativity}), we compute:
\begin{align*}
m_F \circ (m_F \otimes_2 1_{F(M)})
&= F(m) \circ \phi_{M,M} \circ \bigl( (F(m) \circ \phi_{M,M}) \otimes_2 1_{F(M)} \bigr) \\
&= F(m) \circ \phi_{M,M} \circ (F(m) \otimes_2 1_{F(M)}) \circ (\phi_{M,M} \otimes_2 1_{F(M)}) \\
&= F(m) \circ F(m \otimes_1 1_M) \circ \phi_{M \otimes_1 M, M} \circ (\phi_{M,M} \otimes_2 1_{F(M)}) \\
&= F(m) \circ F(1_M \otimes_1 m) \circ \phi_{M, M \otimes_1 M} \circ (1_{F(M)} \otimes_2 \phi_{M,M}) \\
&= F(m) \circ \phi_{M, M \otimes_1 M} \circ (1_{F(M)} \otimes_2 F(m)) \circ (1_{F(M)} \otimes_2 \phi_{M,M}) \\
&= F(m) \circ \phi_{M,M} \circ (1_{F(M)} \otimes_2 m_F) \\
&= m_F \circ (1_{F(M)} \otimes_2 m_F),
\end{align*}
where we have used the associativity of $m$ in $\mathcal{C}_1$ and the functoriality of $F$.

For the unit axioms, we verify the left unit condition. By the second coherence diagram (Diagram~\ref{eq:lax_unit}), we have
\[
\phi_{\mathbf{1}_1, M} \circ (\phi_0 \otimes_2 1_{F(M)}) \circ \lambda_{2, F(M)}^{-1} = F(\lambda_{1,M}^{-1}).
\]
Therefore,
\begin{align*}
m_F \circ (u_F \otimes_2 1_{F(M)})
&= F(m) \circ \phi_{M,M} \circ \bigl( (F(u) \circ \phi_0) \otimes_2 1_{F(M)} \bigr) \\
&= F(m) \circ \phi_{M,M} \circ (F(u) \otimes_2 1_{F(M)}) \circ (\phi_0 \otimes_2 1_{F(M)}) \\
&= F(m) \circ F(u \otimes_1 1_M) \circ \phi_{\mathbf{1}_1, M} \circ (\phi_0 \otimes_2 1_{F(M)}) \\
&= F\bigl( m \circ (u \otimes_1 1_M) \bigr) \circ \lambda_{2, F(M)}^{-1} \\
&= F(\lambda_{1,M}) \circ \lambda_{2, F(M)}^{-1} \\
&= \lambda_{2, F(M)},
\end{align*}
where the penultimate equality uses the fact that $M$ is a monoid, and the final equality follows from the naturality of the unitor. The right unit condition is verified analogously, using the right-hand diagram in Diagram~\ref{eq:lax_unit}. Hence $(F(M), m_F, u_F)$ is indeed a monoid in $\mathcal{C}_2$.

It remains to show that this assignment is functorial. If $g \colon M_1 \to M_2$ is a morphism of monoids, then $F(g)$ is a morphism of monoids in $\mathcal{C}_2$, since the naturality of $\phi$ and $\phi_0$ ensures that the required diagrams commute. Therefore, $\overline{F}$ is a well-defined functor between the respective monoid categories.
\end{proof}
\subsection{Colax Monoidal Functors}

Dually to the notion of lax monoidal functors, we now introduce colax monoidal functors, which preserve comonoids rather than monoids. As we shall see, this dual perspective is equally important for transporting Hopf monoidal structures between categories.

\begin{definition}[Colax Monoidal Functor]\label{def:colax_monoidal}
Let $(\mathcal{C}_1, \otimes_1, \mathbf{1}_1)$ and $(\mathcal{C}_2, \otimes_2, \mathbf{1}_2)$ be monoidal categories, and let $F \colon \mathcal{C}_1 \longrightarrow \mathcal{C}_2$ be a functor. The functor $F$ is called a colax monoidal functor if there exist natural transformations
\[
\psi \colon F^2 \longrightarrow F_2 \qquad\text{and}\qquad \psi_0 \colon F_0 \longrightarrow F^0
\]
such that the following diagrams commute for all objects $X, Y, Z$ in $\mathcal{C}_1$:

\[
\begin{tikzcd}
F(X)\otimes_2F(Y)\otimes_2F(Z)&&F(X)\otimes_2 F(Y\otimes_1 Z)\arrow[swap]{ll}{1_{F(X)}\otimes_2\psi_{Y,Z}}\\
F(X\otimes_1 Y)\otimes_2 F(Z)\arrow{u}{\psi_{X,Y}\otimes_2 1_{F(Z)}}&&F(X\otimes_1 Y\otimes_1 Z)\arrow{u}{\psi_{X,Y\otimes_1 Z}}\arrow{ll}{\psi_{X\otimes_1 Y,Z}}
\end{tikzcd}
\label{eq:colax_associativity}
\]

\[
\begin{tikzcd}
\mathbf{1}_2\otimes_2F(X)\arrow{r}{\lambda_{2,F(X)}}&F(X)\\
F(\mathbf{1}_1)\otimes_2F(X)\arrow{u}{\psi_0\otimes_2 1_{F(X)}}&F(\mathbf{1}_1\otimes_1 X)\arrow{u}{F(\lambda_{1,X})}\arrow{l}{\psi_{\mathbf{1}_1,X}}
\end{tikzcd}
\;\;\;\;\;
\begin{tikzcd}
F(X)\otimes_2\mathbf{1}_2\arrow{r}{\rho_{2,F(X)}}&F(X)\\
F(X)\otimes_2F(\mathbf{1}_1)\arrow{u}{1_{F(X)}\otimes_2 \psi_0}&F(X\otimes_1 \mathbf{1}_1)\arrow{u}{F(\rho_{1,X})}\arrow{l}{\psi_{X,\mathbf{1}_1}}
\end{tikzcd}
\label{eq:colax_unit}
\]

The first diagram expresses the compatibility of $\psi$ with the associativity constraints, while the two diagrams on the right encode the compatibility with the left and right unitors, respectively. Note that the directions of the natural transformations are reversed compared to the lax case: $\psi$ goes from $F^2$ to $F_2$, and $\psi_0$ goes from $F_0$ to $F^0$.
\end{definition}

This dual definition is precisely what is needed to ensure that colax monoidal functors preserve comonoids, as we now establish.

\begin{proposition}\label{prop:colax_lifts_comonoids}
Let $(\mathcal{C}_1, \otimes_1)$ and $(\mathcal{C}_2, \otimes_2)$ be monoidal categories, and let $F \colon (\mathcal{C}_1, \otimes_1) \longrightarrow (\mathcal{C}_2, \otimes_2)$ be a colax monoidal functor with structure maps $(\psi, \psi_0)$. Then $F$ induces a functor
\[
\underline{F} \colon \mathsf{Com}(\mathcal{C}_1) \longrightarrow \mathsf{Com}(\mathcal{C}_2).
\]
\end{proposition}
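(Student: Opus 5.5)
The plan is to dualise the proof of Proposition~\ref{prop:lax_lifts_monoids} step by step. Given a comonoid $(C,d,e)$ in $\mathcal{C}_1$, I will put on $F(C)$ the structure
\[
\bigl( F(C),\; d_F := \psi_{C,C}\circ F(d),\; e_F := \psi_0\circ F(e) \bigr),
\]
so that $d_F\colon F(C)\to F(C)\otimes_2 F(C)$ and $e_F\colon F(C)\to \mathbf{1}_2$. I then check the comonoid axioms of Definition~\ref{def:comonoid} and functoriality, in that order.

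\textbf{Coassociativity.} I will compute $(d_F\otimes_2 1_{F(C)})\circ d_F$ as follows.
\begin{itemize}
\item Expand it to $(\psi_{C,C}\otimes_2 1)\circ(F(d)\otimes_2 1)\circ\psi_{C,C}\circ F(d)$.
\item By naturality of $\psi$ in the first variable, replace $(F(d)\otimes_2 1)\circ\psi_{C,C}$ with $\psi_{C\otimes_1 C,C}\circ F(d\otimes_1 1_C)$.
\item Apply the colax associativity diagram (Diagram~\ref{eq:colax_associativity}). This turns $(\psi_{C,C}\otimes_2 1)\circ\psi_{C\otimes_1 C,C}$ into $(1\otimes_2\psi_{C,C})\circ\psi_{C,C\otimes_1 C}$.
\item Use coassociativity of $d$ under $F$, so that $F(d\otimes_1 1)\circ F(d)=F(1\otimes_1 d)\circ F(d)$.
\item Use naturality of $\psi$ in the second variable to reach $(1_{F(C)}\otimes_2 d_F)\circ d_F$.
\end{itemize}
The associators are suppressed throughout, as the paper does.

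\textbf{Counit.} For the left counit law I will compute $(e_F\otimes_2 1)\circ d_F$.
\begin{itemize}
\item Expand it to $(\psi_0\otimes_2 1)\circ(F(e)\otimes_2 1)\circ\psi_{C,C}\circ F(d)$.
\item By naturality of $\psi$, rewrite this as $(\psi_0\otimes_2 1)\circ\psi_{\mathbf{1}_1,C}\circ F((e\otimes_1 1)\circ d)$, which equals $(\psi_0\otimes_2 1)\circ\psi_{\mathbf{1}_1,C}\circ F(\lambda_{1,C}^{-1})$ by the counit axiom in $\mathcal{C}_1$.
\item The left-hand diagram in Diagram~\ref{eq:colax_unit} gives $(\psi_0\otimes_2 1)\circ\psi_{\mathbf{1}_1,C}=\lambda^{-1}_{2,F(C)}\circ F(\lambda_{1,C})$.
\item The whole composite therefore collapses to $\lambda^{-1}_{2,F(C)}$, as required.
\end{itemize}
The right counit law is symmetric, using the right-hand diagram.

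\textbf{Functoriality.} For a comonoid morphism $g\colon C_1\to C_2$, naturality of $\psi$ gives $\psi_{C_2,C_2}\circ F(g\otimes_1 g)=(F(g)\otimes_2 F(g))\circ\psi_{C_1,C_1}$. Combined with $F(d_2\circ g)=F((g\otimes g)\circ d_1)$, this shows $F(g)$ preserves comultiplication. Counit preservation follows from $F(e_2\circ g)=F(e_1)$. Identities and composites are preserved because $F$ is a functor, so $\underline{F}$ is a well-defined functor.

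I expect the only delicate point to be the bookkeeping in the coassociativity chase. One must apply the colax diagram with the correct orientation of the arrows, and handle the suppressed associators consistently. The remaining steps are routine.
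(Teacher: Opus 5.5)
Your proposal is correct and follows the paper's own proof: the same structure $\bigl(F(C),\psi_{C,C}\circ F(d),\psi_0\circ F(e)\bigr)$, the same coassociativity chase via naturality of $\psi$, the colax associativity diagram and coassociativity of $d$, the same use of the left colax unit diagram to reach $\lambda^{-1}_{2,F(C)}$, and functoriality via naturality of $\psi$. Your ordering of the coassociativity steps (colax associativity, then coassociativity of $d$, then naturality of $\psi$ in the second variable) is in fact tidier than the paper's displayed chain, whose intermediate lines drop the factor $1\otimes_2\psi_{C,C}$ and have mismatched domains.
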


\begin{proof}
Let $(C, d, e)$ be a comonoid in $\mathcal{C}_1$. We claim that
\[
\bigl( F(C),\; \psi_{C,C} \circ F(d),\; \psi_0 \circ F(e) \bigr)
\]
is a comonoid in $\mathcal{C}_2$. To verify this, we must check the coassociativity and counit axioms.

For coassociativity, we need to show that the comultiplication $d_F := \psi_{C,C} \circ F(d)$ satisfies
\[
(d_F \otimes_2 1_{F(C)}) \circ d_F = (1_{F(C)} \otimes_2 d_F) \circ d_F.
\]
Indeed, using the naturality of $\psi$ and the first coherence diagram (Diagram~\ref{eq:colax_associativity}), we compute:
\begin{align*}
(d_F \otimes_2 1_{F(C)}) \circ d_F
&= (\psi_{C,C} \otimes_2 1_{F(C)}) \circ (F(d) \otimes_2 1_{F(C)}) \circ \psi_{C,C} \circ F(d) \\
&= (\psi_{C,C} \otimes_2 1_{F(C)}) \circ \psi_{C \otimes_1 C, C} \circ F(d \otimes_1 1_C) \circ F(d) \\
&= \psi_{C, C \otimes_1 C} \circ F(1_C \otimes_1 d) \circ F(d) \\
&= \psi_{C, C \otimes_1 C} \circ F(d \otimes_1 1_C) \circ F(d) \\
&= \psi_{C, C \otimes_1 C} \circ (1_{F(C)} \otimes_2 F(d)) \circ \psi_{C,C} \circ F(d) \\
&= (1_{F(C)} \otimes_2 \psi_{C,C}) \circ (1_{F(C)} \otimes_2 F(d)) \circ \psi_{C,C} \circ F(d) \\
&= (1_{F(C)} \otimes_2 d_F) \circ d_F,
\end{align*}
where we have used the coassociativity of $d$ in $\mathcal{C}_1$ and the functoriality of $F$.

For the counit axioms, we verify the left counit condition. By the second coherence diagram (Diagram~\ref{eq:colax_unit}), we have
\[
\lambda_{2, F(C)} \circ (\psi_0 \otimes_2 1_{F(C)}) \circ \psi_{\mathbf{1}_1, C} \circ F(\lambda_{1,C}^{-1}) = 1_{F(C)}.
\]
Therefore,
\begin{align*}
(e_F \otimes_2 1_{F(C)}) \circ d_F
&= (\psi_0 \otimes_2 1_{F(C)}) \circ (F(e) \otimes_2 1_{F(C)}) \circ \psi_{C,C} \circ F(d) \\
&= (\psi_0 \otimes_2 1_{F(C)}) \circ \psi_{\mathbf{1}_1, C} \circ F(e \otimes_1 1_C) \circ F(d) \\
&= (\psi_0 \otimes_2 1_{F(C)}) \circ \psi_{\mathbf{1}_1, C} \circ F\bigl( (e \otimes_1 1_C) \circ d \bigr) \\
&= (\psi_0 \otimes_2 1_{F(C)}) \circ \psi_{\mathbf{1}_1, C} \circ F(\lambda_{1,C}^{-1}) \\
&= \lambda_{2, F(C)}^{-1},
\end{align*}
where the penultimate equality uses the fact that $C$ is a comonoid, and the final equality follows from the coherence diagram. The right counit condition is verified analogously, using the right-hand diagram in Diagram~\ref{eq:colax_unit} and the identity $\rho_{2, F(C)}^{-1} \circ (1_{F(C)} \otimes_2 \psi_0) \circ \psi_{C, \mathbf{1}_1} \circ F(\rho_{1,C}^{-1}) = 1_{F(C)}$. Hence $(F(C), d_F, e_F)$ is indeed a comonoid in $\mathcal{C}_2$.

It remains to show that this assignment is functorial. If $g \colon C_1 \to C_2$ is a morphism of comonoids, then $F(g)$ is a morphism of comonoids in $\mathcal{C}_2$, since the naturality of $\psi$ and $\psi_0$ ensures that the required diagrams commute. Therefore, $\underline{F}$ is a well-defined functor between the respective comonoid categories.
\end{proof}

\subsection{Bilax Monoidal Functors}

Having established lax and colax monoidal functors separately, we now combine these two notions to obtain bilax monoidal functors. These are precisely the appropriate morphisms between braided monoidal categories that preserve the full bimonoid structure, as they are compatible with both the monoidal and comonoidal aspects simultaneously.

\begin{definition}[Bilax Monoidal Functor]\label{def:bilax}
Let $(\mathcal{C}_1, \otimes_1, \mathbf{1}_1, \beta_1)$ and $(\mathcal{C}_2, \otimes_2, \mathbf{1}_2, \beta_2)$ be braided monoidal categories, and let $F \colon \mathcal{C}_1 \longrightarrow \mathcal{C}_2$ be a functor. The functor $F$ is called a bilax monoidal functor if it is both lax monoidal and colax monoidal, and moreover the following diagrams commute for all objects $W, X, Y, Z$ in $\mathcal{C}_1$:

\[
\begin{tikzcd}
&F(W\otimes_1 X)\otimes_2F(Y\otimes_1 Z)\arrow[swap]{ld}{\phi_{W\otimes_1X,Y\otimes_1Z}}\arrow{rd}{\psi_{W,X}\otimes_2 \psi_{Y,Z}}&\\
F(W\otimes_1X\otimes_1Y\otimes_1Z)\arrow{d}{F(1_W\otimes_1\beta_1\otimes_11_Z)}&&F(W)\otimes_2F(X)\otimes_2F(Y)\otimes_2F(Z)\arrow[swap]{d}{1_{F(W)}\otimes_2\beta_2\otimes_21_{F(Z)}}\\
F(W\otimes_1X\otimes_1Y\otimes_1Z)\arrow{rd}{\psi_{W\otimes_1Y,X\otimes_1Z}}&&F(W)\otimes_2F(X)\otimes_2F(Y)\otimes_2F(Z)\arrow[swap]{ld}{\phi_{W,Y}\otimes_2\phi_{X,Z}}\\
&F(W\otimes_1 X)\otimes_2F(Y\otimes_1 Z)&
\end{tikzcd}
\label{eq:bilax_braiding}
\]

\[
\begin{tikzcd}
\mathbf{1}_2\arrow{d}{\lambda^{-1}_{2,\mathbf{1}_2}}\arrow{r}{\phi_0}&F(\mathbf{1}_1)\arrow{r}{F(\lambda^{-1}_{1,\mathbf{1}_1})}&F(\mathbf{1}_1\otimes_1\mathbf{1}_1)\arrow{d}{\psi_{\mathbf{1}_1,\mathbf{1}_1}}\\
\mathbf{1}_2\otimes_2\mathbf{1}_2\arrow{rr}{\phi_0\otimes_2\phi_0}&&F(\mathbf{1}_1)\otimes_2 F(\mathbf{1}_1)
\end{tikzcd}
\;\;\;\;\;
\begin{tikzcd}
\mathbf{1}_2&F(\mathbf{1}_1)\arrow[swap]{l}{\psi_0}&F(\mathbf{1}_1\otimes_1\mathbf{1}_1)\arrow[swap]{l}{F(\lambda_{1,\mathbf{1}_1})}\\
\mathbf{1}_2\otimes_2\mathbf{1}_2\arrow{u}{\lambda_{2,\mathbf{1}_2}}&&F(\mathbf{1}_1)\otimes_2 F(\mathbf{1}_1)\arrow[swap]{ll}{\psi_0\otimes_2\psi_0}\arrow{u}{\phi_{\mathbf{1}_1,\mathbf{1}_1}}
\end{tikzcd}
\label{eq:bilax_unit}
\]

\[
\begin{tikzcd}
&F(\mathbf{1}_1)\arrow{dr}{\psi_0}&\\
\mathbf{1}_2\arrow{ur}{\phi_0}\arrow{rr}{1_{\mathbf{1}_2}}&&\mathbf{1}_2
\end{tikzcd}
\label{eq:bilax_id}
\]

The first diagram expresses the compatibility between the lax structure $\phi$, the colax structure $\psi$, and the braidings $\beta_1$ and $\beta_2$. The second and third diagrams encode the compatibility of the unit and counit structures, respectively.
\end{definition}

This compatibility condition ensures that the lax and colax structures interact harmoniously, allowing bilax monoidal functors to lift bimonoids from one category to another.

\begin{proposition}\label{prop:bilax_lifts_bimonoids}
Let $(\mathcal{C}_1, \otimes_1)$ and $(\mathcal{C}_2, \otimes_2)$ be braided monoidal categories, and let $F \colon (\mathcal{C}_1, \otimes_1) \longrightarrow (\mathcal{C}_2, \otimes_2)$ be a bilax monoidal functor. Then $F$ induces a functor
\[
\widetilde{F} \colon \mathsf{Bic}(\mathcal{C}_1) \longrightarrow \mathsf{Bic}(\mathcal{C}_2).
\]
\end{proposition}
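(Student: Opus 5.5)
Given a bimonoid $(B,m,u,d,e)$ in $\mathcal{C}_1$, I equip $F(B)$ with the monoid structure from Proposition~\ref{prop:lax_lifts_monoids} and the comonoid structure from Proposition~\ref{prop:colax_lifts_comonoids}:
\[
m_F := F(m)\circ\phi_{B,B},\quad u_F := F(u)\circ\phi_0,\quad d_F := \psi_{B,B}\circ F(d),\quad e_F := \psi_0\circ F(e).
\]
Then $(F(B),m_F,u_F)$ is a monoid and $(F(B),d_F,e_F)$ is a comonoid, so only the four compatibility diagrams of Definition~\ref{def:bimonoid} remain. On morphisms, a bimonoid morphism $g$ is sent to $F(g)$. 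By those two propositions, $F(g)$ is simultaneously a monoid and a comonoid morphism, so $\widetilde{F}$ is a functor as soon as it is well defined on objects.

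The three unit/counit conditions are short. For $e_F\circ u_F = 1_{\mathbf{1}_2}$ I write $\psi_0\circ F(e)\circ F(u)\circ\phi_0 = \psi_0\circ\phi_0$, using $e\circ u = 1$, and this equals $1$ by Diagram~\ref{eq:bilax_id}. For $d_F\circ u_F = (u_F\otimes u_F)\circ\lambda^{-1}$, I expand $\psi_{B,B}\circ F(d\circ u)\circ\phi_0$. Next I replace $d\circ u$ by $(u\otimes u)\circ\lambda_{1,\mathbf{1}_1}^{-1}$ and use naturality of $\psi$ to move $F(u\otimes u)$ past it as $F(u)\otimes F(u)$. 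This leaves $\psi_{\mathbf{1}_1,\mathbf{1}_1}\circ F(\lambda^{-1})\circ\phi_0$, which the left diagram of \ref{eq:bilax_unit} identifies with $(\phi_0\otimes\phi_0)\circ\lambda_{2}^{-1}$. The condition $e_F\circ m_F = \lambda_{\mathbf{1}_2}\circ(e_F\otimes e_F)$ is dual: I use naturality of $\phi$ and then the right diagram of \ref{eq:bilax_unit}.

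The main obstacle is the bialgebra compatibility
\[
d_F\circ m_F = (m_F\otimes m_F)\circ(1\otimes\beta_2\otimes 1)\circ(d_F\otimes d_F).
\]
I start from the left side, $\psi_{B,B}\circ F(d\circ m)\circ\phi_{B,B}$, and substitute $d\circ m = (m\otimes m)\circ(1\otimes\beta_1\otimes 1)\circ(d\otimes d)$ in $\mathcal{C}_1$. Naturality of $\psi$ pulls $F(m\otimes m)$ out to the left as $F(m)\otimes F(m)$, and naturality of $\phi$ pulls $F(d\otimes d)$ out to the right as $F(d)\otimes F(d)$. What remains in the middle is
\[
\psi_{B\otimes B,B\otimes B}\circ F(1\otimes\beta_1\otimes 1)\circ\phi_{B\otimes B,B\otimes B},
\]
which is the left-hand path of Diagram~\ref{eq:bilax_braiding} with $W=X=Y=Z=B$. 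That diagram rewrites it as $(\phi_{B,B}\otimes\phi_{B,B})\circ(1\otimes\beta_2\otimes 1)\circ(\psi_{B,B}\otimes\psi_{B,B})$. Regrouping with functoriality of $\otimes_2$ then gives exactly the right side. The delicate point is bookkeeping: the associators are suppressed, so I must check that the diagram's conventions for $F(W\otimes X\otimes Y\otimes Z)$ match the bracketing used here. By the standing convention of omitting associativity constraints, which coherence justifies, I expect this to go through.
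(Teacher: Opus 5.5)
Your proposal is correct and follows the paper's route: lift the monoid and comonoid structures via Propositions~\ref{prop:lax_lifts_monoids} and~\ref{prop:colax_lifts_comonoids}, use the bilax axioms for the compatibility conditions, and use naturality for morphisms; the paper's sketch only asserts the four verifications, which you carry out, pairing each with the right bilax diagram. Your associator concern is harmless: $\phi_{B\otimes B,B\otimes B}$ and $\psi_{B\otimes B,B\otimes B}$ force the bracketing $(B\otimes B)\otimes(B\otimes B)$, which is exactly the bracketing on which $1\otimes\beta\otimes 1$ acts in the bimonoid axiom, so the same middle-interchange map appears on both sides.
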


\begin{proof}[Sketch of the construction]
Given a bimonoid $(B, m, u, d, e)$ in $\mathcal{C}_1$, its image under $\widetilde{F}$ is defined by combining the induced monoid and comonoid structures from Propositions~\ref{prop:lax_lifts_monoids} and~\ref{prop:colax_lifts_comonoids}. Specifically, the induced bimonoid in $\mathcal{C}_2$ is
\[
\bigl( F(B),\; F(m) \circ \phi_{B,B},\; F(u) \circ \phi_0,\; \psi_{B,B} \circ F(d),\; \psi_0 \circ F(e) \bigr).
\]
The bilax compatibility diagrams ensure that the multiplication and comultiplication are compatible with each other (the bialgebra condition). Similarly, if $g \colon B_1 \to B_2$ is a morphism of bimonoids, then $F(g)$ is a morphism of bimonoids in $\mathcal{C}_2$ by the naturality of the structure maps. Hence $\widetilde{F}$ is a well-defined functor between the bimonoid categories.
\end{proof}

Finally, we introduce the notion of a strong (or bistrong) monoidal functor, which will be crucial when considering equivalences of categories of Hopf monoids.

\begin{definition}[Bistrong Monoidal Functor]\label{def:bistrong}
Let $(\mathcal{C}_1, \otimes_1, \mathbf{1}_1, \beta_1)$ and $(\mathcal{C}_2, \otimes_2, \mathbf{1}_2, \beta_2)$ be braided monoidal categories, and let $F \colon \mathcal{C}_1 \longrightarrow \mathcal{C}_2$ be a functor. The functor $F$ is called a bistrong monoidal functor if it is bilax monoidal and the natural transformations
\[
\phi \colon F_2 \longrightarrow F^2, \qquad
\phi_0 \colon F^0 \longrightarrow F_0, \qquad
\psi \colon F^2 \longrightarrow F_2, \qquad
\psi_0 \colon F_0 \longrightarrow F^0
\]
are all isomorphisms. In other words, the lax and colax structures are mutually inverse up to natural isomorphism.
\end{definition}

\subsection{Preservation of Hopf Monoids}

We now establish the fundamental result that bistrong monoidal functors preserve the full Hopf monoid structure, including the antipode. This is one of the key tools for transporting Hopf-theoretic constructions between different braided monoidal categories.

\begin{proposition}\label{prop:bistrong_preserves_hopf}
Let $(\mathcal{C}_1, \otimes_1, \mathbf{1}_1, \beta_1)$ and $(\mathcal{C}_2, \otimes_2, \mathbf{1}_2, \beta_2)$ be braided monoidal categories, let $F \colon \mathcal{C}_1 \longrightarrow \mathcal{C}_2$ be a bistrong monoidal functor, and let $H$ be a Hopf monoid in $\mathcal{C}_1$ with antipode $s$. Then $F(H)$ is a Hopf monoid in $\mathcal{C}_2$ with antipode $F(s)$. Moreover, $F$ induces a functor
\[
\widehat{F} \colon \mathsf{Hopf}(\mathcal{C}_1) \longrightarrow \mathsf{Hopf}(\mathcal{C}_2).
\]
\end{proposition}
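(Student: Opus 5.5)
The plan is to reduce everything to the bimonoid case already handled, and then check the antipode equations by a direct computation in which the structure maps $\phi$ and $\psi$ cancel.

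First, since a bistrong monoidal functor is in particular bilax, Proposition~\ref{prop:bilax_lifts_bimonoids} already shows that
\[
\bigl(F(H),\; m_F := F(m)\circ\phi_{H,H},\; u_F := F(u)\circ\phi_0,\; d_F := \psi_{H,H}\circ F(d),\; e_F := \psi_0\circ F(e)\bigr)
\]
is a bimonoid in $\mathcal{C}_2$. It also shows that $F$ sends bimonoid morphisms to bimonoid morphisms. So the only new content is that $F(s)$ is a two-sided inverse of $1_{F(H)}$ in the convolution monoid $\operatorname{Hom}_{\mathcal{C}_2}(F(H),F(H))$ of Definition~\ref{def:convolution}.

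Second, I would pin down the reading of Definition~\ref{def:bistrong}. I interpret the phrase ``mutually inverse'' as
\[
\phi_{X,Y}\circ\psi_{X,Y} = 1_{F(X\otimes_1 Y)}, \qquad \phi_0 = \psi_0^{-1}.
\]
For the unit part, the last bilax diagram~\ref{eq:bilax_id} gives $\psi_0\circ\phi_0 = 1_{\mathbf{1}_2}$. Since both maps are isomorphisms, this forces $\phi_0\circ\psi_0 = 1_{F(\mathbf{1}_1)}$ as well. For the binary part, I will use the identity $\phi\circ\psi = 1$ as the content of the bistrong hypothesis. This is the step I expect to be the main obstacle. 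The definition literally only asserts that each of $\phi$ and $\psi$ is invertible. If the relation $\phi\circ\psi=1$ is not taken as part of the definition, I would try to extract it from the bilax braiding diagram~\ref{eq:bilax_braiding} specialised with units inserted, combined with the unit diagrams~\ref{eq:bilax_unit}. Failing that, I would state it explicitly as the intended convention.

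Third comes the computation itself. Using naturality of $\phi$ and functoriality of $F$, I would write
\[
m_F\circ(F(s)\otimes_2 1_{F(H)})\circ d_F
= F(m)\circ F(s\otimes_1 1_H)\circ\phi_{H,H}\circ\psi_{H,H}\circ F(d).
\]
Applying $\phi_{H,H}\circ\psi_{H,H}=1$ turns the right-hand side into
\[
F\bigl(m\circ(s\otimes_1 1_H)\circ d\bigr) = F(u\circ e).
\]
Inserting $1 = \phi_0\circ\psi_0$ then gives
\[
F(u)\circ\phi_0\circ\psi_0\circ F(e) = u_F\circ e_F.
\]
The other side, $m_F\circ(1_{F(H)}\otimes_2 F(s))\circ d_F$, is handled symmetrically. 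Hence $F(s)$ is the antipode of $F(H)$, and by uniqueness of inverses in a monoid it is the antipode.

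Finally, for functoriality: $\mathsf{Hopf}(\mathcal{C}_i)$ is a full subcategory of $\mathsf{Bic}(\mathcal{C}_i)$ by Definition~\ref{def:hopf_category}. The functor $\widetilde{F}$ sends Hopf monoids to Hopf monoids by the computation above. Therefore $\widehat{F}$ is simply the restriction of $\widetilde{F}$ to $\mathsf{Hopf}(\mathcal{C}_1)$, and it lands in $\mathsf{Hopf}(\mathcal{C}_2)$. Preservation of antipodes by the image morphisms then follows automatically from Proposition~\ref{prop:antipode_functorial}.
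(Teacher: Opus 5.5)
Your proposal is correct and takes the same route as the paper's sketch: lift to bimonoids via Proposition~\ref{prop:bilax_lifts_bimonoids}, then use that $f\mapsto F(f)$ is compatible with convolution (this is all the paper's ``identification in a suitable sense'' means), so $F(s)$ inverts $1_{F(H)}$; you simply write out the computation the paper leaves implicit. The identity you flag needs no extra convention: set $X=Y=\mathbf{1}_1$ in the braiding axiom of Definition~\ref{def:bilax}, and use $\beta_{\mathbf{1},\mathbf{1}}=1$ in both categories, naturality of $\beta_2$, the lax and colax unit axioms, and $\psi_0=\phi_0^{-1}$; this yields $\psi_{W,Z}\circ\phi_{W,Z}=1$, so $\psi=\phi^{-1}$ because $\phi$ is invertible.
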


\begin{proof}[Sketch of the construction]
By Proposition~\ref{prop:bilax_lifts_bimonoids}, $F$ induces a functor on bimonoids. Since the antipode $s$ is the convolution inverse of $\operatorname{id}_H$ in $\operatorname{Hom}_{\mathcal{C}_1}(H, H)$, and since the bistrong structure provides natural isomorphisms that allow us to identify the convolution monoid $\operatorname{Hom}_{\mathcal{C}_2}(F(H), F(H))$ with $F(\operatorname{Hom}_{\mathcal{C}_1}(H, H))$ in a suitable sense, it follows that $F(s)$ is the convolution inverse of $\operatorname{id}_{F(H)}$ in $\mathcal{C}_2$. Hence $F(H)$ is a Hopf monoid with antipode $F(s)$. The functoriality of the assignment follows from the naturality of the structure maps.
\end{proof}

\subsection{Monoidal Adjunctions}

The following result, due to Kelly \cite{DA}, establishes a crucial relationship between adjoint functors and monoidal structures: if the left adjoint is bistrong monoidal, then the right adjoint is automatically lax monoidal.

\begin{proposition}[Doctrinal Adjunction for Monoidal Functors]\label{prop:adjunction_monoidal}
Let $(\mathcal{C}_1, \otimes_1)$ and $(\mathcal{C}_2, \otimes_2)$ be monoidal categories, and let
\[
L \colon (\mathcal{C}_1, \otimes_1) \longrightarrow (\mathcal{C}_2, \otimes_2)
\qquad\text{and}\qquad
R \colon (\mathcal{C}_2, \otimes_2) \longrightarrow (\mathcal{C}_1, \otimes_1)
\]
be functors such that $(L, R)$ is an adjunction, with unit $\eta \colon 1_{\mathcal{C}_1} \Rightarrow R L$ and counit $\epsilon \colon L R \Rightarrow 1_{\mathcal{C}_2}$. If $L$ is a bistrong monoidal functor with structure maps $(\phi^L, \phi_0^L, \psi^L, \psi_0^L)$, then $R$ is a lax monoidal functor with structure maps given by
\[
\phi^R_{X,Y} := R(\epsilon_X \otimes_2 \epsilon_Y) \circ R\bigl((\phi^L_{R(X), R(Y)})^{-1}\bigr) \circ \eta_{R(X) \otimes_1 R(Y)}
\]
for objects $X, Y$ in $\mathcal{C}_2$, and
\[
\phi^R_0 := R\bigl((\phi_0^L)^{-1}\bigr) \circ \eta_{\mathbf{1}_1}.
\]
The verification that these maps satisfy the required coherence diagrams is a straightforward diagram chase using the triangular identities of the adjunction.
\end{proposition}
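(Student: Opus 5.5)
The natural route is to use the mates correspondence: the formulas for $\phi^R$ and $\phi^R_0$ are exactly the mates of $(\phi^L)^{-1}$ and $(\phi^L_0)^{-1}$ under the adjunction. Equivalently, $\phi^R_{X,Y}$ is the unique morphism $R(X)\otimes_1 R(Y)\to R(X\otimes_2 Y)$ whose adjoint transpose is
\[
\epsilon_X\otimes_2\epsilon_Y\circ(\phi^L_{R(X),R(Y)})^{-1}\colon L(R(X)\otimes_1 R(Y))\to X\otimes_2 Y .
\]
Likewise, $\phi^R_0$ is the transpose of $(\phi^L_0)^{-1}\colon L(\mathbf{1}_1)\to\mathbf{1}_2$. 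This reformulation turns every required identity between morphisms into $\mathcal{C}_1$ with codomain $R(-)$ into an identity in $\mathcal{C}_2$ after transposition. Two morphisms $f,g\colon A\to R(B)$ agree iff $\epsilon_B\circ L(f)=\epsilon_B\circ L(g)$, so this is where the triangular identities enter.

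First I would check naturality of $\phi^R$ in $X$ and $Y$. This follows from naturality of $\eta$, $\epsilon$ and $(\phi^L)^{-1}$: the transpose of $R(f\otimes_2 g)\circ\phi^R_{X,Y}$ is $(f\otimes_2 g)\circ(\epsilon_X\otimes_2\epsilon_Y)\circ(\phi^L)^{-1}$. The transpose of $\phi^R_{X',Y'}\circ(R(f)\otimes_1 R(g))$ is $(\epsilon_{X'}\otimes_2\epsilon_{Y'})\circ(LR(f)\otimes_2 LR(g))\circ(\phi^L)^{-1}$, and the two agree by naturality of $\epsilon$.

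Next comes the associativity hexagon, which I expect to be the main obstacle. Both sides of Diagram~\ref{eq:lax_associativity} for $R$ are maps $R(X)\otimes_1R(Y)\otimes_1R(Z)\to R(X\otimes_2Y\otimes_2Z)$, so I would compare transposes. Transposing a composite $R(h)\circ\phi^R_{A,B}\circ(k\otimes_1 1)$ uses the fact that the transpose of $\phi^R$ is $\epsilon\otimes\epsilon\circ(\phi^L)^{-1}$. After this, $L(\phi^R_{X,Y}\otimes_1 1)$ is pushed through $(\phi^L)^{-1}$ by naturality, producing $L(\phi^R_{X,Y})\otimes_2 LR(Z)$. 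Then $\epsilon_{X\otimes Y}\circ L(\phi^R_{X,Y})$ collapses to $(\epsilon_X\otimes\epsilon_Y)\circ(\phi^L)^{-1}$. Each side should reduce to
\[
(\epsilon_X\otimes_2\epsilon_Y\otimes_2\epsilon_Z)\circ(\text{a composite of }(\phi^L)^{-1}\text{'s}),
\]
and the two composites of inverses agree because inverting the lax associativity square for $L$ (Diagram~\ref{eq:lax_associativity}) gives an equality of iso composites. The care needed is in tracking the associators implicitly suppressed by the paper's convention, but no new idea is required.

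Finally, for the unit axioms I would transpose the right side $F(\lambda^{-1}_{1,X})$-type square for $R$ in the same way. Its left leg transposes to $(\epsilon_{\mathbf{1}_2}\otimes\epsilon_X)\circ(L(\phi_0^R)\otimes 1)\circ(\phi^L)^{-1}\circ L(\lambda^{-1})$, where $\epsilon_{\mathbf{1}_2}\circ L(\phi^R_0)=(\phi^L_0)^{-1}$. The inverted unit diagram for $L$ (Diagram~\ref{eq:lax_unit}) then identifies this with $\lambda^{-1}_{2}\circ\epsilon_X$ up to naturality of $\lambda$, which is the transpose of $R(\lambda^{-1}_{2,X})$. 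The right unitor is symmetric.

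Note that only the strong lax part of $L$ (invertibility of $\phi^L,\phi^L_0$) is used. The colax data $\psi^L$ and the braidings play no role; if desired, one may substitute $(\phi^L)^{-1}=\psi^L$ when these coincide.
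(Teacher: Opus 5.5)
Your proposal is correct, and it is essentially the argument the paper has in mind. The paper only asserts that the coherence axioms follow from a diagram chase using the triangular identities. Your transposition argument carries out exactly that chase. You use the triangle identity to compute $\epsilon_{X\otimes_2 Y}\circ L(\phi^R_{X,Y})=(\epsilon_X\otimes_2\epsilon_Y)\circ(\phi^L_{R(X),R(Y)})^{-1}$ and $\epsilon_{\mathbf{1}_2}\circ L(\phi^R_0)=(\phi^L_0)^{-1}$, and then reduce naturality, associativity and the unit axioms for $R$ to the inverted coherence axioms for $L$. Your observation that only the invertibility of $\phi^L$ and $\phi^L_0$ is used is also accurate.
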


\subsection{An Illustrative Example}

We conclude this section with a concrete example that illustrates the power of the preceding results in a familiar setting.

\begin{example}\label{ex:free_forgetful}
Consider the free functor
\[
L \colon (\Set, \times) \longrightarrow (\VectK, \otimes_K)
\]
that sends a set $X$ to the free $K$-vector space $K[X]$ with basis $X$. This functor is bistrong monoidal: the lax structure is given by the canonical isomorphism $K[X] \otimes_K K[Y] \cong K[X \times Y]$, and the colax structure is its inverse. Therefore, by Proposition~\ref{prop:adjunction_monoidal}, its right adjoint—the forgetful functor
\[
R \colon (\VectK, \otimes_K) \longrightarrow (\Set, \times)
\]
which sends a vector space to its underlying set—is a lax monoidal functor.

Consequently, by Proposition~\ref{prop:bistrong_preserves_hopf}, the free functor induces a functor on Hopf monoids:
\[
\widehat{L} \colon \mathsf{Hopf}(\Set) \longrightarrow \mathsf{Hopf}(\VectK).
\]
Since $\mathsf{Hopf}(\Set)$ is precisely the category of groups (by Example~\ref{ex:hopf_group}), and $\mathsf{Hopf}(\VectK)$ is the category of classical $K$-Hopf algebras (by Example~\ref{ex:hopf_algebra}), we obtain the well-known construction that sends a group $G$ to its group algebra $K[G]$, which is a Hopf algebra with the usual comultiplication given by $g \mapsto g \otimes g$ and antipode $g \mapsto g^{-1}$.

It is important to note, however, that the induced functor on the right,
\[
\overline{R} \colon \mathsf{Mon}(\VectK) \longrightarrow \mathsf{Mon}(\Set),
\]
does not preserve the comonoidal structure in general. That is, if $H$ is a bimonoid (or Hopf monoid) in $\VectK$, its underlying set need not carry a compatible bimonoid structure in $\Set$ with respect to the cartesian product. This illustrates the asymmetry between the lax and colax structures in this example.
\end{example}

\subsection{Braided Monoidal Functors}

We now introduce the appropriate notion of morphism between braided monoidal categories: a monoidal functor that is compatible with the braidings. This compatibility ensures that the functor preserves not only the monoidal structure but also the interchange of tensor factors.

\begin{definition}[Braided Monoidal Functor]\label{def:braided_functor}
Let $(\mathcal{C}_1, \otimes_1, \mathbf{1}_1, \beta)$ and $(\mathcal{C}_2, \otimes_2, \mathbf{1}_2, \gamma)$ be braided monoidal categories, and let $F \colon \mathcal{C}_1 \longrightarrow \mathcal{C}_2$ be a lax monoidal functor with structure map $\phi \colon F_2 \Rightarrow F^2$. The functor $F$ is called a braided monoidal functor if the following diagram commutes for all objects $X, Y$ in $\mathcal{C}_1$:
\[
\begin{tikzcd}
F(X)\otimes_2 F(Y)\arrow{d}{\phi_{X,Y}}\arrow{rr}{\gamma_{F(X),F(Y)}}&& F(Y)\otimes_2 F(X)\arrow{d}{\phi_{Y,X}}\\
F(X\otimes_1 Y)\arrow{rr}{F(\beta_{X,Y})}&&F(Y\otimes_1 X)
\end{tikzcd}
\]
In other words, the lax structure map $\phi$ intertwines the braidings $\beta$ and $\gamma$.
\end{definition}

This condition has an important consequence: it ensures that the functor preserves the compatibility between the monoidal structure and the braiding, which is essential for lifting bimonoids and Hopf monoids. The following proposition records a useful identity that follows from this definition.

\begin{proposition}\label{prop:braided_hexagon_identity}
Let $(\mathcal{C}_1, \otimes_1, \mathbf{1}_1, \beta)$ and $(\mathcal{C}_2, \otimes_2, \mathbf{1}_2, \gamma)$ be braided monoidal categories, and let $F \colon \mathcal{C}_1 \longrightarrow \mathcal{C}_2$ be a braided monoidal functor. Then, for any object $X$ in $\mathcal{C}_1$, the following identity holds:
\[
\begin{aligned}
&F(1_X \otimes_1 \beta_{X,X} \otimes_1 1_X) \circ \phi_{X\otimes_1 X, X\otimes_1 X} \circ (\phi_{X,X} \otimes_2 \phi_{X,X}) \\
&\qquad = \phi_{X\otimes_1 X, X\otimes_1 X} \circ (\phi_{X,X} \otimes_2 \phi_{X,X}) \circ (1_{F(X)} \otimes_2 \gamma_{F(X),F(X)} \otimes_2 1_{F(X)}).
\end{aligned}
\]
\end{proposition}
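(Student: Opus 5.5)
We would reduce both sides to the single structure map $\Phi := \phi^{(4)}_{X,X,X,X} \colon F(X)^{\otimes_2 4} \to F(X^{\otimes_1 4})$ obtained by iterating $\phi$. The key fact is that, by the associativity coherence of a lax monoidal functor (Diagram~\ref{eq:lax_associativity}), any two ways of bracketing the iterated structure maps from $F(X)\otimes_2F(X)\otimes_2F(X)\otimes_2F(X)$ to $F(X\otimes_1X\otimes_1X\otimes_1X)$ agree once associators are suppressed. In particular,
\[
\phi_{X\otimes_1 X, X\otimes_1 X}\circ(\phi_{X,X}\otimes_2\phi_{X,X})
= \phi_{X, X\otimes_1 X\otimes_1 X}\circ\bigl(1_{F(X)}\otimes_2\phi_{X\otimes_1X,X}\bigr)\circ\bigl(1_{F(X)}\otimes_2\phi_{X,X}\otimes_2 1_{F(X)}\bigr).
\]
We would justify this by applying Diagram~\ref{eq:lax_associativity} twice, once with $(X,X,X\otimes_1X)$ and once with $(X,X,X)$ tensored on the left by $F(X)$, and using naturality of $\phi$.

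Next, we insert the middle braiding. On the left-hand side, $F(1_X\otimes_1\beta_{X,X}\otimes_1 1_X)$ acts only on the middle two factors. Using naturality of $\phi$ in both arguments (first $\phi_{X,-}$ with respect to $\beta_{X,X}\otimes_1 1_X$, then $\phi_{-,X}$ with respect to $\beta_{X,X}$), we push it through the rebracketed composite. This yields
\[
\phi_{X,X^{\otimes 3}}\circ\bigl(1\otimes_2\phi_{X\otimes_1X,X}\bigr)\circ\bigl(1\otimes_2 F(\beta_{X,X})\circ\phi_{X,X}\otimes_2 1\bigr).
\]
Now Definition~\ref{def:braided_functor} gives $F(\beta_{X,X})\circ\phi_{X,X}=\phi_{X,X}\circ\gamma_{F(X),F(X)}$. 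Substituting this and reversing the first step, with the same coherence read backwards, yields exactly the right-hand side
\[
\phi_{X\otimes_1 X, X\otimes_1 X}\circ(\phi_{X,X}\otimes_2\phi_{X,X})\circ(1_{F(X)}\otimes_2\gamma_{F(X),F(X)}\otimes_2 1_{F(X)}).
\]

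The main obstacle we expect is the bookkeeping of associators. The paper suppresses parentheses, but the rebracketing identity genuinely requires $\alpha$'s in both $\mathcal{C}_1$ and $\mathcal{C}_2$, together with naturality of $\alpha$ to move $\gamma$ and $\beta$ past them. We would handle this either by invoking Mac Lane coherence, so as to work in strictified categories, or by writing out the two instances of Diagram~\ref{eq:lax_associativity} with explicit associators and checking that the associators introduced on the way in cancel those on the way out. The remaining steps are pure naturality.
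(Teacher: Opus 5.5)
Your proposal is correct and follows essentially the same route as the paper. Both arguments rebracket into $\phi_{X,X\otimes_1X\otimes_1X}\circ(1_{F(X)}\otimes_2\phi_{X\otimes_1X,X})\circ(1_{F(X)}\otimes_2\phi_{X,X}\otimes_2 1_{F(X)})$ via the lax associativity axiom, move $F(1_X\otimes_1\beta_{X,X}\otimes_1 1_X)$ onto the middle factor by naturality of $\phi$, apply the braided-functor condition, and rebracket back; the paper merely runs the chain starting from the right-hand side.
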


\begin{proof}
Let $X$ be an object of $\mathcal{C}_1$. We shall prove the identity by a sequence of equalities, each justified by either the naturality of $\phi$, the coherence axioms of monoidal categories, or the braidedness of $F$.

Starting from the right-hand side and applying the naturality of $\phi$ together with the associativity coherence, we obtain:
\begin{align*}
&\phi_{X\otimes_1 X, X\otimes_1 X} \circ (\phi_{X,X} \otimes_2 \phi_{X,X}) \circ (1_{F(X)} \otimes_2 \gamma_{F(X),F(X)} \otimes_2 1_{F(X)}) \\
&= \phi_{X\otimes_1 X, X\otimes_1 X} \circ (1_{F(X\otimes_1 X)} \otimes_2 \phi_{X,X}) \circ (\phi_{X,X} \otimes_2 1_{F(X)} \otimes_2 1_{F(X)}) \\
&\qquad \circ (1_{F(X)} \otimes_2 \gamma_{F(X),F(X)} \otimes_2 1_{F(X)}) \tag{naturality of $\phi$} \\
&= \phi_{X\otimes_1 X\otimes_1 X, X} \circ (\phi_{X\otimes_1 X, X} \otimes_2 1_{F(X)}) \circ (\phi_{X,X} \otimes_2 1_{F(X)} \otimes_2 1_{F(X)}) \\
&\qquad \circ (1_{F(X)} \otimes_2 \gamma_{F(X),F(X)} \otimes_2 1_{F(X)}) \tag{associativity coherence} \\
&= \phi_{X\otimes_1 X\otimes_1 X, X} \circ (\phi_{X, X\otimes_1 X} \otimes_2 1_{F(X)}) \circ (1_{F(X)} \otimes_2 \phi_{X,X} \otimes_2 1_{F(X)}) \\
&\qquad \circ (1_{F(X)} \otimes_2 \gamma_{F(X),F(X)} \otimes_2 1_{F(X)}) \tag{naturality of $\phi$} \\
&= \phi_{X\otimes_1 X\otimes_1 X, X} \circ (\phi_{X, X\otimes_1 X} \otimes_2 1_{F(X)}) \circ (1_{F(X)} \otimes_2 F(\beta_{X,X}) \otimes_2 1_{F(X)}) \\
&\qquad \circ (1_{F(X)} \otimes_2 \phi_{X,X} \otimes_2 1_{F(X)}) \tag{braidedness of $F$} \\
&= \phi_{X, X\otimes_1 X\otimes_1 X} \circ (1_{F(X)} \otimes_2 \phi_{X\otimes_1 X, X}) \circ (1_{F(X)} \otimes_2 F(\beta_{X,X}) \otimes_2 1_{F(X)}) \\
&\qquad \circ (1_{F(X)} \otimes_2 \phi_{X,X} \otimes_2 1_{F(X)}) \tag{naturality of $\phi$} \\
&= \phi_{X, X\otimes_1 X\otimes_1 X} \circ (1_{F(X)} \otimes_2 F(\beta_{X,X} \otimes_1 1_X)) \circ (1_{F(X)} \otimes_2 \phi_{X\otimes_1 X, X}) \\
&\qquad \circ (1_{F(X)} \otimes_2 \phi_{X,X} \otimes_2 1_{F(X)}) \tag{functoriality of $F$} \\
&= F(1_X \otimes_1 \beta_{X,X} \otimes_1 1_X) \circ \phi_{X, X\otimes_1 X\otimes_1 X} \circ (1_{F(X)} \otimes_2 \phi_{X\otimes_1 X, X}) \\
&\qquad \circ (1_{F(X)} \otimes_2 \phi_{X,X} \otimes_2 1_{F(X)}) \tag{naturality of $\phi$} \\
&= F(1_X \otimes_1 \beta_{X,X} \otimes_1 1_X) \circ \phi_{X, X\otimes_1 X\otimes_1 X} \circ (1_{F(X)} \otimes_2 \phi_{X, X\otimes_1 X}) \\
&\qquad \circ (1_{F(X)} \otimes_2 1_{F(X)} \otimes_2 \phi_{X,X}) \tag{naturality of $\phi$} \\
&= F(1_X \otimes_1 \beta_{X,X} \otimes_1 1_X) \circ \phi_{X\otimes_1 X, X\otimes_1 X} \circ (\phi_{X,X} \otimes_2 1_{F(X)\otimes_2 F(X)}) \\
&\qquad \circ (1_{F(X)} \otimes_2 1_{F(X)} \otimes_2 \phi_{X,X}) \tag{associativity coherence} \\
&= F(1_X \otimes_1 \beta_{X,X} \otimes_1 1_X) \circ \phi_{X\otimes_1 X, X\otimes_1 X} \circ (\phi_{X,X} \otimes_2 \phi_{X,X}).
\end{align*}
This proves the desired identity. For a detailed treatment of braided monoidal functors and their properties, we refer the reader to \cite{BT}.
\end{proof}

\subsection{Braided Monoidal Adjunctions}

We now establish two fundamental results that relate adjunctions with monoidal and braided structures. These results, due to Kelly \cite{DA}, show that monoidal structures can be transported across adjunctions in a coherent way.

\begin{proposition}\label{prop:braided_adjunction}
Let $(\mathcal{C}_1, \otimes_1)$ and $(\mathcal{C}_2, \otimes_2)$ be monoidal categories, and let
\[
L \colon (\mathcal{C}_1, \otimes_1) \longrightarrow (\mathcal{C}_2, \otimes_2)
\qquad\text{and}\qquad
R \colon (\mathcal{C}_2, \otimes_2) \longrightarrow (\mathcal{C}_1, \otimes_1)
\]
be functors such that $(L, R)$ is an adjunction, with unit $\eta \colon 1_{\mathcal{C}_1} \Rightarrow R L$ and counit $\epsilon \colon L R \Rightarrow 1_{\mathcal{C}_2}$. If $L$ is a braided bistrong monoidal functor with structure maps $(\phi^L, \phi_0^L, \psi^L, \psi_0^L)$, then $R$ is a braided lax monoidal functor.
\end{proposition}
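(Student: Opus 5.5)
By Proposition~\ref{prop:adjunction_monoidal}, $R$ is already lax monoidal with structure maps
\[
\phi^R_{X,Y} = R(\epsilon_X \otimes_2 \epsilon_Y) \circ R\bigl((\phi^L_{R(X),R(Y)})^{-1}\bigr) \circ \eta_{R(X)\otimes_1 R(Y)}
\]
and $\phi^R_0$. It remains only to verify the braiding condition of Definition~\ref{def:braided_functor}. Write $\beta$ for the braiding of $\mathcal{C}_1$ and $\gamma$ for that of $\mathcal{C}_2$. For all objects $X,Y$ of $\mathcal{C}_2$ we must show
\[
R(\gamma_{X,Y}) \circ \phi^R_{X,Y} = \phi^R_{Y,X} \circ \beta_{R(X),R(Y)} .
\]

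The first step is to rewrite the braidedness of $L$ in terms of the inverse structure map. From $\phi^L_{B,A}\circ\gamma_{L(A),L(B)} = L(\beta_{A,B})\circ \phi^L_{A,B}$, and since $\phi^L$ is invertible (bistrong), we get
\[
(\phi^L_{B,A})^{-1}\circ L(\beta_{A,B}) = \gamma_{L(A),L(B)}\circ (\phi^L_{A,B})^{-1}.
\]
We will apply this with $A=R(X)$ and $B=R(Y)$.

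The second step is the chase, done by moving $\beta$ from right to left through the three factors of $\phi^R_{Y,X}$.
\begin{itemize}
\item Naturality of $\eta$ gives $\eta_{R(Y)\otimes_1 R(X)}\circ \beta_{R(X),R(Y)} = RL(\beta_{R(X),R(Y)})\circ \eta_{R(X)\otimes_1 R(X)\!\!}$, more precisely $RL(\beta_{R(X),R(Y)})\circ\eta_{R(X)\otimes_1 R(Y)}$.
\item The identity from the first step, with $R$ applied, turns $R((\phi^L_{R(Y),R(X)})^{-1})\circ RL(\beta)$ into $R(\gamma_{LR(X),LR(Y)})\circ R((\phi^L_{R(X),R(Y)})^{-1})$.
\item Naturality of $\gamma$ in both variables gives $(\epsilon_Y\otimes_2\epsilon_X)\circ\gamma_{LR(X),LR(Y)} = \gamma_{X,Y}\circ(\epsilon_X\otimes_2\epsilon_Y)$.
\end{itemize}
Applying $R$ and composing these three equalities yields $R(\gamma_{X,Y})\circ\phi^R_{X,Y}$, which is the required identity.

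Every step uses only naturality and functoriality of $R$, so no step is a serious obstacle. The one point needing care is the first step: one must check that inverting the braided square of $L$ produces the square for $(\phi^L)^{-1}$ with exactly the indices matching those in $\phi^R_{Y,X}$, and in particular that the correct braiding component, not its inverse, appears. This is where the order convention $\phi_{Y,X}\circ\gamma = F(\beta)\circ\phi_{X,Y}$ of Definition~\ref{def:braided_functor} must be followed carefully.

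No unit condition is needed. Being braided is a condition only on the binary structure map, so once the square above commutes, $R$ is a braided lax monoidal functor.
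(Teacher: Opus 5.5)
Your proposal is correct and follows the paper's proof step for step. The paper runs the same chain from $\phi^R_{Y,X}\circ\beta_{R(X),R(Y)}$: naturality of $\eta$, braidedness of $L$ rewritten for $(\phi^L)^{-1}$, then naturality of $\gamma$ applied under $R$. Your explicit derivation of the inverted square $(\phi^L_{B,A})^{-1}\circ L(\beta_{A,B})=\gamma_{L(A),L(B)}\circ(\phi^L_{A,B})^{-1}$ spells out a step the paper uses silently.
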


\begin{proof}
Let $X$ and $Y$ be objects of $\mathcal{C}_2$. We need to show that the lax structure map $\phi^R$ of $R$ (defined in Proposition~\ref{prop:adjunction_monoidal}) is compatible with the braidings. We compute:
\begin{align*}
\phi^R_{Y,X} \circ \beta_{R(X), R(Y)}
&= R(\epsilon_Y \otimes_2 \epsilon_X) \circ R\bigl((\phi^L_{R(Y), R(X)})^{-1}\bigr) \circ \eta_{R(Y) \otimes_1 R(X)} \circ \beta_{R(X), R(Y)} \\
&= R(\epsilon_Y \otimes_2 \epsilon_X) \circ R\bigl((\phi^L_{R(Y), R(X)})^{-1}\bigr) \circ R(L(\beta_{R(X), R(Y)})) \circ \eta_{R(X) \otimes_1 R(Y)} \\
&= R(\epsilon_Y \otimes_2 \epsilon_X) \circ R\bigl(\gamma_{L(R(X)), L(R(Y))}\bigr) \circ R\bigl((\phi^L_{R(X), R(Y)})^{-1}\bigr) \circ \eta_{R(X) \otimes_1 R(Y)} \\
&= R\bigl((\epsilon_Y \otimes_2 \epsilon_X) \circ \gamma_{L(R(X)), L(R(Y))}\bigr) \circ R\bigl((\phi^L_{R(X), R(Y)})^{-1}\bigr) \circ \eta_{R(X) \otimes_1 R(Y)} \\
&= R\bigl(\gamma_{X,Y} \circ (\epsilon_X \otimes_2 \epsilon_Y)\bigr) \circ R\bigl((\phi^L_{R(X), R(Y)})^{-1}\bigr) \circ \eta_{R(X) \otimes_1 R(Y)} \\
&= R(\gamma_{X,Y}) \circ R(\epsilon_X \otimes_2 \epsilon_Y) \circ R\bigl((\phi^L_{R(X), R(Y)})^{-1}\bigr) \circ \eta_{R(X) \otimes_1 R(Y)} \\
&= R(\gamma_{X,Y}) \circ \phi^R_{X,Y}.
\end{align*}
Here we used, in order: the definition of $\phi^R$, the naturality of $\eta$, the braidedness of $L$, the functoriality of $R$, the naturality of $\gamma$ (the braiding in $\mathcal{C}_2$), and finally the definition of $\phi^R$ again. Therefore $R$ preserves the braiding, and hence is a braided monoidal functor.
\end{proof}

The following result provides a converse relationship between lax and oplax structures in the context of adjunctions. It is a fundamental theorem in the theory of doctrinal adjunctions.

\begin{proposition}\label{prop:oplax_lax_adjunction}
Let $(\mathcal{C}_1, \otimes_1)$ and $(\mathcal{C}_2, \otimes_2)$ be monoidal categories, and let
\[
L \colon \mathcal{C}_1 \longrightarrow \mathcal{C}_2
\qquad\text{and}\qquad
R \colon \mathcal{C}_2 \longrightarrow \mathcal{C}_1
\]
be functors such that $(L, R)$ is an adjunction, with unit $\eta \colon 1_{\mathcal{C}_1} \Rightarrow R L$ and counit $\epsilon \colon L R \Rightarrow 1_{\mathcal{C}_2}$. Then $L$ is an oplax monoidal functor if and only if $R$ is a lax monoidal functor.
\end{proposition}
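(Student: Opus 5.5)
The plan is to prove both directions by writing down explicit mates of the structure maps across the adjunction $L\dashv R$, following Kelly's doctrinal adjunction \cite{DA}, and then checking that the coherence diagrams correspond under the bijection $\operatorname{Hom}_{\mathcal{C}_2}(L(A),B)\cong\operatorname{Hom}_{\mathcal{C}_1}(A,R(B))$. Here "oplax" means colax in the sense of Definition~\ref{def:colax_monoidal}, with structure maps $\psi^L_{A,B}\colon L(A\otimes_1 B)\to L(A)\otimes_2 L(B)$ and $\psi^L_0\colon L(\mathbf{1}_1)\to\mathbf{1}_2$.

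\emph{From $L$ colax to $R$ lax.} For objects $X,Y$ of $\mathcal{C}_2$, I define $\phi^R_{X,Y}$ as the adjoint transpose of
\[
(\epsilon_X\otimes_2\epsilon_Y)\circ\psi^L_{R(X),R(Y)}\colon L(R(X)\otimes_1 R(Y))\to X\otimes_2 Y,
\]
so that
\[
\phi^R_{X,Y}=R\bigl((\epsilon_X\otimes_2\epsilon_Y)\circ\psi^L_{R(X),R(Y)}\bigr)\circ\eta_{R(X)\otimes_1R(Y)}.
\]
Similarly, I take $\phi^R_0:=R(\psi^L_0)\circ\eta_{\mathbf{1}_1}$, the transpose of $\psi^L_0$. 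Naturality of $\phi^R$ follows from naturality of $\eta$, $\epsilon$ and $\psi^L$. This construction specialises to the formula of Proposition~\ref{prop:adjunction_monoidal} when $\psi^L=(\phi^L)^{-1}$.

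\emph{From $R$ lax to $L$ colax.} Conversely, I define $\psi^L_{A,B}$ as the transpose of
\[
\phi^R_{L(A),L(B)}\circ(\eta_A\otimes_1\eta_B)\colon A\otimes_1 B\to R(L(A)\otimes_2 L(B)),
\]
that is,
\[
\psi^L_{A,B}=\epsilon_{L(A)\otimes_2L(B)}\circ L\bigl(\phi^R_{L(A),L(B)}\circ(\eta_A\otimes_1\eta_B)\bigr),
\]
and I set $\psi^L_0:=\epsilon_{\mathbf{1}_2}\circ L(\phi^R_0)$.

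\emph{The constructions are mutually inverse.} The triangle identities $\epsilon_L\circ L\eta=1$ and $R\epsilon\circ\eta_R=1$, combined with naturality, show that the two assignments are inverse to each other. This bijection is the standard mate correspondence for the $2$-cells $L\circ\otimes_1\Rightarrow\otimes_2\circ(L\times L)$ and $\otimes_1\circ(R\times R)\Rightarrow R\circ\otimes_2$, taken along the adjunctions $L\dashv R$ and $L\times L\dashv R\times R$.

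\emph{Transporting the coherence axioms.} This is the main step. Because transposition is a bijection compatible with pre-composition by $L(f)$ and post-composition by $R(g)$, it suffices to show that the two sides of the colax associativity diagram (Diagram~\ref{eq:colax_associativity}) at $(R(X),R(Y),R(Z))$, once post-composed with $\epsilon\otimes\epsilon\otimes\epsilon$, transpose exactly to the two sides of the lax associativity diagram (Diagram~\ref{eq:lax_associativity}) at $(X,Y,Z)$, each pre-composed with the associator. I would do this by expanding $\phi^R_{X\otimes_2Y,Z}\circ(\phi^R_{X,Y}\otimes_1 1)$ and then using naturality of $\eta$ to slide all the units to the right. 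Terms of the form $R L$ applied to $\epsilon$ then collapse via the triangle identities, and what remains is $R$ of the colax composite. The unit diagrams are handled the same way, using $\psi^L_0$ and $\phi^R_0$. The reverse direction is the formal dual of this argument, with $\epsilon$ and $\eta$ exchanged.

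The main obstacle is the associativity transport, since it requires tracking the associator and three nested units. To keep it manageable, I will work entirely on the level of mates. Using the functoriality of the mate correspondence under horizontal and vertical pasting, each coherence diagram becomes an equality of pasted $2$-cells, and mates of equal pastings are equal. This avoids an element-wise diagram chase altogether.
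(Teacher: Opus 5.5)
Your proposal is correct and follows essentially the same route as the paper: both define the structure maps as mates across $L\dashv R$ (your $\psi^L_{A,B}$ is exactly the paper's formula) and reduce the coherence axioms to naturality of $\eta,\epsilon$ plus the triangle identities, as in Kelly's doctrinal adjunction. Your version is more complete than the paper's sketch. You treat both directions explicitly and check that the two constructions are mutually inverse. You also write $\psi^L_0=\epsilon_{\mathbf{1}_2}\circ L(\phi^R_0)$ with the correct composition order, whereas the paper's formula $L(\phi^R_0)\circ\epsilon_{\mathbf{1}_2}$ does not typecheck.
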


\begin{proof}
We outline the construction in one direction; the converse is dual. Suppose that $R$ is a lax monoidal functor with structure maps $(\phi^R, \phi_0^R)$. We define oplax structure maps for $L$ as follows:
\[
\psi^L_{X,Y} := \epsilon_{L(X) \otimes_2 L(Y)} \circ L(\phi^R_{L(X), L(Y)}) \circ L(\eta_X \otimes_1 \eta_Y)
\]
for objects $X, Y$ in $\mathcal{C}_1$, and
\[
\psi^L_0 := L(\phi_0^R) \circ \epsilon_{\mathbf{1}_2},
\]
where we have identified $\epsilon_{L(\mathbf{1}_1)}$ with $\epsilon_{\mathbf{1}_2}$ via the unit and counit conditions.

Recall that, by the construction of the induced Hopf monoid in Proposition~\ref{prop:bilax_lifts_bimonoids}, the comultiplication of \( L(G) \) is \( d_{L(G)} = \psi^L_{G,G} \circ L(d_G) \).

The verification that these maps satisfy the coherence diagrams for an oplax monoidal functor is a straightforward but lengthy diagram chase, using the triangular identities of the adjunction and the coherence of the monoidal structures. The key point is that the naturality of $\eta$ and $\epsilon$ ensures the compatibility of the structure maps with the associativity and unit constraints. Conversely, given an oplax structure on $L$, one can define a lax structure on $R$ by a dual construction. This is the content of Kelly's doctrinal adjunction theorem.
\end{proof}

%%%%%%%%%%%%%%%%%%%%%%%%%%%%%%%%%%%%%%%%%%%%%%%%%%%%%%%%%%%%%%%%%%%%
\section{Categorification of Grouplike Elements}\label{sec:grouplike}

We now turn to one of the central constructions of this work: the categorification of the classical notion of grouplike elements. In the classical setting, given a Hopf algebra $H$, an element $x \in H$ is called grouplike if $\Delta(x) = x \otimes x$ and $\varepsilon(x) = 1$. These elements form a group under multiplication, and this construction is functorial. Our goal is to generalise this construction to arbitrary braided monoidal categories, replacing elements by morphisms and equalities by universal properties.

\begin{definition}\label{def:grouplike_classical}
Let $(H, m, u, d, e, s)$ be a $K$-Hopf algebra and let $x \in H$. We say that $x$ is a grouplike element if $d(x) = x \otimes x$ and $e(x) = 1$. We denote the set of grouplike elements of $H$ by $\mathbb{G}(H)$. In fact, $\mathbb{G}$ is a functor from the category of $K$-Hopf algebras to the category of groups. Moreover, $\mathbb{G}$ is the right adjoint of $\widehat{L}$, where $L$ is the free functor from the category of sets to the category of $K$-vector spaces.
\end{definition}

The following proposition provides the categorical generalisation of this classical construction. We recall that a finitely complete category is canonically cartesian monoidal; we shall use this structure freely in what follows.

\begin{proposition}\label{prop:grouplike_adjunction}
Let $\mathcal{C}$ be a finitely complete category, let $(\mathcal{D}, \otimes, \beta)$ be a braided monoidal category, and let
\[
L \colon \mathcal{C} \longrightarrow \mathcal{D}
\qquad\text{and}\qquad
R \colon \mathcal{D} \longrightarrow \mathcal{C}
\]
be functors such that $(L, R)$ is an adjunction and $L$ is a braided bistrong monoidal functor. Then the induced functor $\widehat{L} \colon \mathsf{Hopf}(\mathcal{C}) \longrightarrow \mathsf{Hopf}(\mathcal{D})$ has a right adjoint
\[
\mathbb{G} \colon \mathsf{Hopf}(\mathcal{D}) \longrightarrow \mathsf{Hopf}(\mathcal{C}).
\]
\end{proposition}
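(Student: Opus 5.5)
The plan is to build $\mathbb{G}(H)$ as an equalizer-type subobject of $R(H)$ that classifies comonoid maps out of objects of $\mathcal{C}$, and then to check the universal property directly. In the cartesian category $\mathcal{C}$ every object $X$ carries the diagonal comonoid $(X,\Delta_X,!_X)$ of Example~\ref{ex:comonoid_cartesian}, and every morphism is a comonoid morphism. Consequently a Hopf monoid in $\mathcal{C}$ is just a group object, and by Proposition~\ref{prop:antipode_functorial} a morphism in $\mathsf{Hopf}(\mathcal{C})$ is just a monoid morphism.

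\textbf{Step 1 (construction).} Fix a Hopf monoid $(H,m,u,d,e,s)$ in $\mathcal{D}$. By Proposition~\ref{prop:braided_adjunction}, $R$ is braided lax monoidal with structure maps $(\phi^R,\phi^R_0)$. Under the adjunction, a morphism $x\colon X\to R(H)$ corresponds to $x^\flat=\epsilon_H\circ L(x)\colon L(X)\to H$. The map $x^\flat$ is a comonoid morphism from the comonoid $\underline{L}(X)$ of Proposition~\ref{prop:colax_lifts_comonoids} to $H$ precisely when two pairs of maps agree:
\[
\phi^R_{H,H}\circ\langle x,x\rangle = R(d)\circ x
\qquad\text{and}\qquad
\phi^R_0\circ !_X = R(e)\circ x .
\]
To see this, transpose the equations $d\circ x^\flat=(x^\flat\otimes x^\flat)\circ\psi^L_{X,X}\circ L(\Delta_X)$ and $e\circ x^\flat=\psi^L_0\circ L(!_X)$ across the adjunction. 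Here one uses that $\phi^R$ is the mate of $(\phi^L)^{-1}=\psi^L$, and similarly for the units.

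This suggests the definition
\[
\mathbb{G}(H)\xrightarrow{\;i\;}R(H)
\]
as the joint equalizer, which exists because $\mathcal{C}$ is finitely complete, of the pair
\[
R(d),\;\phi^R_{H,H}\circ\Delta_{R(H)}\colon R(H)\rightrightarrows R(H\otimes H)
\]
and the pair
\[
R(e),\;\phi^R_0\circ !_{R(H)}\colon R(H)\rightrightarrows R(\mathbf{1}).
\]
By construction, maps $X\to\mathbb{G}(H)$ are in natural bijection with comonoid maps $\underline{L}(X)\to H$.

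\textbf{Step 2 (group structure).} Next I would show that $\mathbb{G}(H)$ is a submonoid of $\overline{R}(H)$ in the sense of Definition~\ref{def:submonoid}, and is closed under the antipode.

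For the multiplication, I would show that $R(m)\circ\phi^R_{H,H}\circ(i\times i)$ equalizes both pairs. This reduces to the bimonoid compatibility of $H$, transported through $R$. The main tool is the braided four-fold identity of Proposition~\ref{prop:braided_hexagon_identity} applied to $R$, together with the associativity of $\phi^R$.

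For the unit, I would show that $R(u)\circ\phi^R_0$ factors through $i$, using the unit and counit diagrams of Definition~\ref{def:bimonoid}.

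For the inverse, I would show that $R(s)\circ i$ factors through $i$. This uses Proposition~\ref{prop:antipode_braiding}, $d\circ s=\beta\circ(s\otimes s)\circ d$, together with braidedness of $R$ and Proposition~\ref{prop:diagonal_naturality} to absorb the swap on the diagonal.

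Proposition~\ref{prop:submonoid_inherits} then makes $\mathbb{G}(H)$ a monoid. The restricted antipode $s'$ satisfies the inverse laws after cancelling the mono $i$, so $\mathbb{G}(H)$ is a group object. Functoriality in $H$ follows because a Hopf morphism $f$ satisfies $R(f)\circ i$ equalizing the defining pairs, by naturality of $\phi^R$.

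\textbf{Step 3 (adjunction).} For a group object $G$ in $\mathcal{C}$, I would restrict the bijection of Step 1 with $X=G$. On the left this gives monoid maps $G\to\mathbb{G}(H)$, and on the right bimonoid maps $\widehat{L}(G)\to H$. It remains to check that $x\colon G\to\mathbb{G}(H)$ is multiplicative exactly when $x^\flat$ is. This is the usual fact that under a monoidal adjunction with $L$ strong, monoid maps $L(G)\to H$ correspond to monoid maps $G\to \overline{R}(H)$ (doctrinal adjunction, Proposition~\ref{prop:adjunction_monoidal}), combined with $i$ being a mono and a monoid map. Antipodes are then automatic by Proposition~\ref{prop:antipode_functorial}, and naturality in $G$ and $H$ is inherited from the adjunction $L\dashv R$.

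\textbf{Main obstacle.} I expect the hardest part to be Step 2, specifically verifying that the multiplication restricts to $\mathbb{G}(H)$ and that the equalizer condition is preserved by $R(m)\circ\phi^R$. This requires carefully commuting $\phi^R$ past the braiding and the diagonal. I would first try to avoid the diagram chase by a transposition argument. Generalized elements of $\mathbb{G}(H)$ are comonoid maps $L(X)\to H$. The product of two of them is $m\circ(a\otimes b)\circ\psi^L\circ L(\Delta)$, and this is again a comonoid map because $H$ is a bimonoid and $L(X)$ is a cocommutative-type comonoid, with the braided compatibility entering through braidedness of $L$. The Yoneda lemma would then give the factorization.
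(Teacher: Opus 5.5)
Your proposal is correct. The construction of $\mathbb{G}(H)$ and the closure arguments match the paper: your joint equaliser is the paper's pullback of the two equalisers, and your treatment of multiplication (via Proposition~\ref{prop:braided_hexagon_identity}), unit, and antipode (via Proposition~\ref{prop:antipode_braiding} and braidedness of $R$) are the paper's Steps 2--4.

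The difference is in how the adjunction is established. The paper writes down $\widehat{\eta}_G$ (the factorisation of $\eta_G$ through $\iota_{\widehat{L}(G)}$) and $\widehat{\epsilon}_H=\epsilon_H\circ L(\iota_H)$, then checks the triangle identities. You instead prove first that $\iota_H$ represents $X\mapsto\mathsf{Com}(\mathcal{D})(\underline{L}(X),H)$. This makes $\mathbb{G}$ right adjoint to $\underline{L}$ already at the level of comonoids, and you then intersect this bijection with the monoid-level adjunction $\overline{L}\dashv\overline{R}$.

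What your route buys:
\begin{itemize}
\item It explains why the equaliser is the right object, and it gives the hom-set bijection with no triangle-identity bookkeeping.
\item It supplies the argument that the paper's Step 6 only asserts. $R(H)$ is merely a monoid in $\mathcal{C}$, so $\iota_H$ is not a Hopf morphism. Comultiplicativity of $\epsilon_H\circ L(\iota_H)$ is exactly your bijection at $x=1_{\mathbb{G}(H)}$.
\end{itemize}
The paper's route, in exchange, displays the unit and counit explicitly, but you recover them as the transposes of $1_{\widehat{L}(G)}$ and $1_{\mathbb{G}(H)}$.

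Your Yoneda alternative for closure under multiplication also works. $\underline{L}(X)$ is cocommutative by braidedness of $L$, and convolution products of comonoid maps from a cocommutative comonoid into a bimonoid are again comonoid maps.

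Two points to write out:
\begin{itemize}
\item The restriction to monoid maps is not literally the content of Proposition~\ref{prop:adjunction_monoidal}, which only constructs $\phi^R$. It follows from the same transposition as in your Step 1: for $y\colon G\to R(H)$, the transpose of $m\circ(y^\flat\otimes y^\flat)\circ(\phi^L_{G,G})^{-1}$ is $R(m)\circ\phi^R_{H,H}\circ(y\times y)$.
\item You explicitly use $\psi^L=(\phi^L)^{-1}$ and $\psi^L_0=(\phi^L_0)^{-1}$. The paper makes the same assumption tacitly in Step 5, when it identifies the bistrong $\psi^L$ with the mate of $\phi^R$.
\end{itemize}
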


\begin{proof}
We construct the right adjoint $\mathbb{G}$ in several steps. Throughout, we denote by $\eta \colon 1_{\mathcal{C}} \Rightarrow R L$ the unit of the adjunction $(L, R)$, and by $\epsilon \colon L R \Rightarrow 1_{\mathcal{D}}$ its counit.

\subsection*{Step 1: Construction of the underlying object}

Let $(H, m_H, u_H, d_H, e_H, s_H)$ be a Hopf monoid in $\mathcal{D}$. We wish to assign to $H$ a submonoid $(\mathbb{G}(H), \iota_H)$ of $\overline{R}(H)$ in $\mathcal{C}$.

First, we define $(\mathbb{G}_1(H), \iota^H_1)$ as the equaliser of the two morphisms
\[
R(d_H),\; \phi_{H,H} \circ d_{R(H)} \colon R(H) \longrightarrow R(H \otimes H),
\]
where $\phi$ is the lax structure map of $R$ (which exists by Proposition~\ref{prop:adjunction_monoidal}). Similarly, we define $(\mathbb{G}_2(H), \iota^H_2)$ as the equaliser of the two morphisms
\[
R(e_H),\; \phi_0 \circ ! \colon R(H) \longrightarrow \mathbf{1}_{\mathcal{C}},
\]
where $! \colon R(H) \to \mathbf{1}_{\mathcal{C}}$ is the unique morphism to the terminal object. Since $\mathcal{C}$ is finitely complete, these equalisers exist.

We then form the pullback $(\mathbb{G}(H), \pi^H_1, \pi^H_2)$ of the equaliser monomorphisms $(\mathbb{G}_1(H), \iota^H_1)$ and $(\mathbb{G}_2(H), \iota^H_2)$:
\[
\begin{tikzcd}
\mathbb{G}(H) \arrow{r}{\pi^H_1} \arrow{d}{\pi^H_2} &
\mathbb{G}_1(H) \arrow{d}{\iota^H_1} \\
\mathbb{G}_2(H) \arrow{r}{\iota^H_2} &
R(H)
\end{tikzcd}
\]
Define $\iota_H \colon \mathbb{G}(H) \longrightarrow R(H)$ by $\iota_H := \iota^H_1 \circ \pi^H_1 = \iota^H_2 \circ \pi^H_2$. Since the pullback of a monomorphism is a monomorphism, $\iota_H$ is a monomorphism.

We observe that $\iota_H$ equalises the two relevant pairs of morphisms. Indeed:
\begin{align*}
R(d_H) \circ \iota_H
&= R(d_H) \circ \iota^H_1 \circ \pi^H_1 \\
&= \phi_{H,H} \circ d_{R(H)} \circ \iota^H_1 \circ \pi^H_1 \\
&= \phi_{H,H} \circ d_{R(H)} \circ \iota_H,
\end{align*}
and similarly:
\begin{align*}
R(e_H) \circ \iota_H
&= R(e_H) \circ \iota^H_2 \circ \pi^H_2 \\
&= \phi_0 \circ ! \circ \iota^H_2 \circ \pi^H_2 \\
&= \phi_0 \circ ! \circ \iota_H.
\end{align*}

\subsection*{Step 2: Monoid structure on $\mathbb{G}(H)$}

We now provide $\mathbb{G}(H)$ with the structure of a monoid. Consider the composite morphism
\[
R(m_H) \circ \phi_{H,H} \circ (\iota_H \times \iota_H) \colon \mathbb{G}(H) \times \mathbb{G}(H) \longrightarrow R(H).
\]
We claim that this morphism equalises $R(d_H)$ and $\phi_{H,H} \circ d_{R(H)}$. To see this, we use the bimonoid axioms of $H$, the naturality of $\phi$, the fact that $\iota_H$ equalises $R(d_H)$ and $\phi_{H,H} \circ d_{R(H)}$, and the braided hexagon identity established in Proposition~\ref{prop:braided_hexagon_identity}:
\begin{align*}
&R(d_H) \circ R(m_H) \circ \phi_{H,H} \circ (\iota_H \times \iota_H) \\
&= R(d_H \circ m_H) \circ \phi_{H,H} \circ (\iota_H \times \iota_H) \\
&= R\bigl((m_H \otimes m_H) \circ (1_H \otimes \beta_{H,H} \otimes 1_H) \circ (d_H \otimes d_H)\bigr) \circ \phi_{H,H} \circ (\iota_H \times \iota_H) \\
&= R(m_H \otimes m_H) \circ R(1_H \otimes \beta_{H,H} \otimes 1_H) \circ R(d_H \otimes d_H) \circ \phi_{H,H} \circ (\iota_H \times \iota_H) \\
&= R(m_H \otimes m_H) \circ R(1_H \otimes \beta_{H,H} \otimes 1_H) \circ \phi_{H\otimes H, H\otimes H} \circ (R(d_H) \times R(d_H)) \circ (\iota_H \times \iota_H) \\
&= R(m_H \otimes m_H) \circ R(1_H \otimes \beta_{H,H} \otimes 1_H) \circ \phi_{H\otimes H, H\otimes H} \\
&\qquad \circ (\phi_{H,H} \circ d_{R(H)} \times \phi_{H,H} \circ d_{R(H)}) \circ (\iota_H \times \iota_H) \\
&= R(m_H \otimes m_H) \circ R(1_H \otimes \beta_{H,H} \otimes 1_H) \circ \phi_{H\otimes H, H\otimes H} \circ (\phi_{H,H} \times \phi_{H,H}) \\
&\qquad \circ (d_{R(H)} \times d_{R(H)}) \circ (\iota_H \times \iota_H) \\
&= R(m_H \otimes m_H) \circ \phi_{H\otimes H, H\otimes H} \circ (\phi_{H,H} \times \phi_{H,H}) \\
&\qquad \circ (1_{R(H)} \times \tau_{R(H),R(H)} \times 1_{R(H)}) \circ (d_{R(H)} \times d_{R(H)}) \circ (\iota_H \times \iota_H) \\
&= \phi_{H,H} \circ (R(m_H) \times R(m_H)) \circ (\phi_{H,H} \times \phi_{H,H}) \\
&\qquad \circ (1_{R(H)} \times \tau_{R(H),R(H)} \times 1_{R(H)}) \circ (d_{R(H)} \times d_{R(H)}) \circ (\iota_H \times \iota_H) \\
&= \phi_{H,H} \circ (R(m_H) \circ \phi_{H,H} \times R(m_H) \circ \phi_{H,H}) \\
&\qquad \circ (1_{R(H)} \times \tau_{R(H),R(H)} \times 1_{R(H)}) \circ (d_{R(H)} \times d_{R(H)}) \circ (\iota_H \times \iota_H) \\
&= \phi_{H,H} \circ d_{R(H)} \circ R(m_H) \circ \phi_{H,H} \circ (\iota_H \times \iota_H),
\end{align*}
where the penultimate equality uses the fact that $R(m_H) \circ \phi_{H,H}$ is the multiplication of $\overline{R}(H)$.

Thus $R(m_H) \circ \phi_{H,H} \circ (\iota_H \times \iota_H)$ equalises $R(d_H)$ and $\phi_{H,H} \circ d_{R(H)}$. Hence, by the universal property of the equaliser, there exists a unique morphism
\[
\xi^H_1 \colon \mathbb{G}(H) \times \mathbb{G}(H) \longrightarrow \mathbb{G}_1(H)
\]
such that $R(m_H) \circ \phi_{H,H} \circ (\iota_H \times \iota_H) = \iota^H_1 \circ \xi^H_1$.

Similarly, using that $H$ is a bimonoid and that $\iota_H$ equalises $R(e_H)$ and $\phi_0 \circ !$, we obtain:
\begin{align*}
R(e_H) \circ R(m_H) \circ \phi_{H,H} \circ (\iota_H \times \iota_H)
&= R(e_H \circ m_H) \circ \phi_{H,H} \circ (\iota_H \times \iota_H) \\
&= R(\lambda_{\mathbf{1}} \circ (e_H \otimes e_H)) \circ \phi_{H,H} \circ (\iota_H \times \iota_H) \\
&= R(\lambda_{\mathbf{1}}) \circ R(e_H \otimes e_H) \circ \phi_{H,H} \circ (\iota_H \times \iota_H) \\
&= R(\lambda_{\mathbf{1}}) \circ \phi_{\mathbf{1}, \mathbf{1}} \circ (R(e_H) \times R(e_H)) \circ (\iota_H \times \iota_H) \\
&= R(\lambda_{\mathbf{1}}) \circ \phi_{\mathbf{1}, \mathbf{1}} \circ (\phi_0 \circ ! \times \phi_0 \circ !) \circ (\iota_H \times \iota_H) \\
&= \phi_0 \circ ! \circ R(m_H) \circ \phi_{H,H} \circ (\iota_H \times \iota_H).
\end{align*}
Therefore, by the universal property of the equaliser, there exists a unique morphism
\[
\xi^H_2 \colon \mathbb{G}(H) \times \mathbb{G}(H) \longrightarrow \mathbb{G}_2(H)
\]
such that $R(m_H) \circ \phi_{H,H} \circ (\iota_H \times \iota_H) = \iota^H_2 \circ \xi^H_2$.

By the universal property of the pullback, there exists a unique morphism
\[
\xi_H \colon \mathbb{G}(H) \times \mathbb{G}(H) \longrightarrow \mathbb{G}(H)
\]
such that $\pi^H_1 \circ \xi_H = \xi^H_1$ and $\pi^H_2 \circ \xi_H = \xi^H_2$. Moreover, the multiplication factors through $\iota_H$:
\[
\iota_H \circ \xi_H
= \iota^H_2 \circ \pi^H_2 \circ \xi_H
= \iota^H_2 \circ \xi^H_2
= R(m_H) \circ \phi_{H,H} \circ (\iota_H \times \iota_H).
\]
The uniqueness of this factorisation follows from the fact that $\iota_H$ is a monomorphism.

For the unit, we consider $R(u_H) \circ \phi_0 \colon \mathbf{1}_{\mathcal{C}} \longrightarrow R(H)$. A similar calculation shows that this morphism equalises both pairs of morphisms:
\begin{align*}
R(d_H) \circ R(u_H) \circ \phi_0
&= R(d_H \circ u_H) \circ \phi_0 \\
&= R((u_H \otimes u_H) \circ \lambda_{\mathbf{1}}^{-1}) \circ \phi_0 \\
&= \phi_{H,H} \circ (R(u_H) \times R(u_H)) \circ (\phi_0 \times \phi_0) \circ \lambda_{\mathbf{1}}^{-1} \\
&= \phi_{H,H} \circ d_{R(H)} \circ R(u_H) \circ \phi_0,
\end{align*}
and similarly $R(e_H) \circ R(u_H) \circ \phi_0 = \phi_0 \circ ! \circ R(u_H) \circ \phi_0$. Hence there exist unique morphisms
\[
\nu^H_1 \colon \mathbf{1}_{\mathcal{C}} \longrightarrow \mathbb{G}_1(H), \qquad
\nu^H_2 \colon \mathbf{1}_{\mathcal{C}} \longrightarrow \mathbb{G}_2(H)
\]
such that $\iota^H_1 \circ \nu^H_1 = R(u_H) \circ \phi_0 = \iota^H_2 \circ \nu^H_2$. By the universal property of the pullback, there exists a unique morphism
\[
\nu_H \colon \mathbf{1}_{\mathcal{C}} \longrightarrow \mathbb{G}(H)
\]
such that $\pi^H_1 \circ \nu_H = \nu^H_1$ and $\pi^H_2 \circ \nu_H = \nu^H_2$. Again, $\iota_H \circ \nu_H = R(u_H) \circ \phi_0$, and the factorisation is unique.

Since $(\mathbb{G}(H), \iota_H)$ is a submonoid of $\overline{R}(H)$, it follows that $(\mathbb{G}(H), \xi_H, \nu_H)$ is a monoid in $\mathcal{C}$. Since $\mathcal{C}$ is a cartesian monoidal category, every object carries a canonical comonoid structure (by Example~\ref{ex:comonoid_cartesian}), so $(\mathbb{G}(H), \xi_H, \nu_H, d_{\mathbb{G}(H)}, e_{\mathbb{G}(H)})$ is a bimonoid.

\subsection*{Step 3: The antipode}

We now show that $\mathbb{G}(H)$ is a Hopf monoid by constructing an antipode. Consider $R(s_H) \circ \iota_H \colon \mathbb{G}(H) \longrightarrow R(H)$. Using that $s_H$ is an anti-comultiplication morphism (Proposition~\ref{prop:antipode_braiding}), we have:
\begin{align*}
R(d_H) \circ R(s_H) \circ \iota_H
&= R(d_H \circ s_H) \circ \iota_H \\
&= R(\beta_{H,H} \circ (s_H \otimes s_H) \circ d_H) \circ \iota_H \\
&= R(\beta_{H,H}) \circ R(s_H \otimes s_H) \circ R(d_H) \circ \iota_H \\
&= R(\beta_{H,H}) \circ R(s_H \otimes s_H) \circ \phi_{H,H} \circ d_{R(H)} \circ \iota_H \\
&= \phi_{H,H} \circ \tau_{R(H),R(H)} \circ (R(s_H) \times R(s_H)) \circ d_{R(H)} \circ \iota_H \\
&= \phi_{H,H} \circ d_{R(H)} \circ R(s_H) \circ \iota_H.
\end{align*}
Similarly, $R(e_H) \circ R(s_H) \circ \iota_H = \phi_0 \circ ! \circ R(s_H) \circ \iota_H$. Thus $R(s_H) \circ \iota_H$ equalises both pairs of morphisms, so by the universal properties of the equalisers and the pullback, there exists a unique morphism
\[
\sigma_H \colon \mathbb{G}(H) \longrightarrow \mathbb{G}(H)
\]
such that $\iota_H \circ \sigma_H = R(s_H) \circ \iota_H$.

We claim that $\sigma_H$ is the antipode for $\mathbb{G}(H)$. To verify this, we compute:
\begin{align*}
\iota_H \circ \xi_H \circ (1_{\mathbb{G}(H)} \times \sigma_H) \circ d_{\mathbb{G}(H)}
&= R(m_H) \circ \phi_{H,H} \circ (\iota_H \times \iota_H) \circ (1_{\mathbb{G}(H)} \times \sigma_H) \circ d_{\mathbb{G}(H)} \\
&= R(m_H) \circ \phi_{H,H} \circ (\iota_H \times \iota_H \circ \sigma_H) \circ d_{\mathbb{G}(H)} \\
&= R(m_H) \circ \phi_{H,H} \circ (\iota_H \times R(s_H) \circ \iota_H) \circ d_{\mathbb{G}(H)} \\
&= R(m_H) \circ \phi_{H,H} \circ (1_{R(H)} \times R(s_H)) \circ (\iota_H \times \iota_H) \circ d_{\mathbb{G}(H)} \\
&= R(m_H) \circ R(1_H \times s_H) \circ \phi_{H,H} \circ (\iota_H \times \iota_H) \circ d_{\mathbb{G}(H)} \\
&= R(m_H \circ (1_H \times s_H)) \circ \phi_{H,H} \circ d_{R(H)} \circ \iota_H \\
&= R(m_H \circ (1_H \times s_H)) \circ R(d_H) \circ \iota_H \\
&= R(m_H \circ (1_H \times s_H) \circ d_H) \circ \iota_H \\
&= R(u_H \circ e_H) \circ \iota_H \\
&= R(u_H) \circ R(e_H) \circ \iota_H \\
&= R(u_H) \circ \phi_0 \circ ! \circ \iota_H \\
&= \iota_H \circ \nu_H \circ e_{\mathbb{G}(H)}.
\end{align*}
Since $\iota_H$ is a monomorphism, we obtain $\xi_H \circ (1_{\mathbb{G}(H)} \times \sigma_H) \circ d_{\mathbb{G}(H)} = \nu_H \circ e_{\mathbb{G}(H)}$. The other identity $\xi_H \circ (\sigma_H \times 1_{\mathbb{G}(H)}) \circ d_{\mathbb{G}(H)} = \nu_H \circ e_{\mathbb{G}(H)}$ follows by symmetry. Hence $\mathbb{G}(H)$ is a Hopf monoid in $\mathcal{C}$.

\subsection*{Step 4: Functoriality of $\mathbb{G}$}

Let $f \colon H_1 \longrightarrow H_2$ be a morphism of Hopf monoids in $\mathcal{D}$. We must define $\mathbb{G}(f) \colon \mathbb{G}(H_1) \longrightarrow \mathbb{G}(H_2)$. First, we show that $R(f) \circ \iota_{H_1}$ equalises the relevant pairs of morphisms for $H_2$:
\begin{align*}
R(d_{H_2}) \circ R(f) \circ \iota_{H_1}
&= R(d_{H_2} \circ f) \circ \iota_{H_1} \\
&= R((f \otimes f) \circ d_{H_1}) \circ \iota_{H_1} \\
&= R(f \otimes f) \circ R(d_{H_1}) \circ \iota_{H_1} \\
&= R(f \otimes f) \circ \phi_{H_1,H_1} \circ d_{R(H_1)} \circ \iota_{H_1} \\
&= \phi_{H_2,H_2} \circ (R(f) \times R(f)) \circ d_{R(H_1)} \circ \iota_{H_1} \\
&= \phi_{H_2,H_2} \circ d_{R(H_2)} \circ R(f) \circ \iota_{H_1},
\end{align*}
and similarly $R(e_{H_2}) \circ R(f) \circ \iota_{H_1} = \phi_0 \circ ! \circ R(f) \circ \iota_{H_1}$. Hence, by the universal properties of the equalisers and the pullback, there exists a unique morphism
\[
\mathbb{G}(f) \colon \mathbb{G}(H_1) \longrightarrow \mathbb{G}(H_2)
\]
such that $\iota_{H_2} \circ \mathbb{G}(f) = R(f) \circ \iota_{H_1}$.

We now verify that $\mathbb{G}(f)$ is a morphism of monoids. For the multiplication:
\begin{align*}
\iota_{H_2} \circ \mathbb{G}(f) \circ \xi_{H_1}
&= R(f) \circ \iota_{H_1} \circ \xi_{H_1} \\
&= R(f) \circ R(m_{H_1}) \circ \phi_{H_1,H_1} \circ (\iota_{H_1} \times \iota_{H_1}) \\
&= R(f \circ m_{H_1}) \circ \phi_{H_1,H_1} \circ (\iota_{H_1} \times \iota_{H_1}) \\
&= R(m_{H_2} \circ (f \otimes f)) \circ \phi_{H_1,H_1} \circ (\iota_{H_1} \times \iota_{H_1}) \\
&= R(m_{H_2}) \circ R(f \otimes f) \circ \phi_{H_1,H_1} \circ (\iota_{H_1} \times \iota_{H_1}) \\
&= R(m_{H_2}) \circ \phi_{H_2,H_2} \circ (R(f) \times R(f)) \circ (\iota_{H_1} \times \iota_{H_1}) \\
&= R(m_{H_2}) \circ \phi_{H_2,H_2} \circ (R(f) \circ \iota_{H_1} \times R(f) \circ \iota_{H_1}) \\
&= R(m_{H_2}) \circ \phi_{H_2,H_2} \circ (\iota_{H_2} \circ \mathbb{G}(f) \times \iota_{H_2} \circ \mathbb{G}(f)) \\
&= R(m_{H_2}) \circ \phi_{H_2,H_2} \circ (\iota_{H_2} \times \iota_{H_2}) \circ (\mathbb{G}(f) \times \mathbb{G}(f)) \\
&= \iota_{H_2} \circ \xi_{H_2} \circ (\mathbb{G}(f) \times \mathbb{G}(f)).
\end{align*}
Since $\iota_{H_2}$ is a monomorphism, we obtain $\mathbb{G}(f) \circ \xi_{H_1} = \xi_{H_2} \circ (\mathbb{G}(f) \times \mathbb{G}(f))$. For the unit:
\begin{align*}
\iota_{H_2} \circ \mathbb{G}(f) \circ \nu_{H_1}
&= R(f) \circ \iota_{H_1} \circ \nu_{H_1} \\
&= R(f) \circ R(u_{H_1}) \circ \phi_0 \\
&= R(f \circ u_{H_1}) \circ \phi_0 \\
&= R(u_{H_2}) \circ \phi_0 \\
&= \iota_{H_2} \circ \nu_{H_2}.
\end{align*}
Again, since $\iota_{H_2}$ is a monomorphism, we get $\mathbb{G}(f) \circ \nu_{H_1} = \nu_{H_2}$. Thus $\mathbb{G}(f)$ is a morphism of monoids. Since $\mathcal{C}$ is cartesian monoidal, it is automatically a morphism of bimonoids and Hopf monoids. Therefore $\mathbb{G}$ is a well-defined functor
\[
\mathbb{G} \colon \mathsf{Hopf}(\mathcal{D}) \longrightarrow \mathsf{Hopf}(\mathcal{C}).
\]

\subsection*{Step 5: The unit of the adjunction}

It remains to show that $\mathbb{G}$ is the right adjoint of $\widehat{L}$. We construct the unit of the adjunction. Let $(G, m_G, u_G, d_G, e_G, s_G)$ be a Hopf monoid in $\mathcal{C}$. We need to define
\[
\widehat{\eta}_G \colon G \longrightarrow \mathbb{G}(\widehat{L}(G)).
\]
By the universal property of $\iota_{\widehat{L}(G)} \colon \mathbb{G}(\widehat{L}(G)) \longrightarrow R(L(G))$, it suffices to show that $\eta_G \colon G \longrightarrow R(L(G))$ equalises the relevant pairs of morphisms for $\widehat{L}(G)$.

Using the naturality of $\eta$, the triangular identities of the adjunction, and the description of $\psi^L$ from Proposition~\ref{prop:oplax_lax_adjunction}, we compute:
\begin{align*}
R(d_{L(G)}) \circ \eta_G
&= R(\psi^L_{G,G} \circ L(d_G)) \circ \eta_G \\
&= R(\psi^L_{G,G}) \circ R(L(d_G)) \circ \eta_G \\
&= R(\psi^L_{G,G}) \circ \eta_{G \times G} \circ d_G \\
&= R(\epsilon_{L(G) \otimes L(G)} \circ L(\phi^R_{L(G), L(G)}) \circ L(\eta_G \times \eta_G)) \circ \eta_{G \times G} \circ d_G \\
&= R(\epsilon_{L(G) \otimes L(G)}) \circ R(L(\phi^R_{L(G), L(G)})) \circ R(L(\eta_G \times \eta_G)) \circ \eta_{G \times G} \circ d_G \\
&= R(\epsilon_{L(G) \otimes L(G)}) \circ R(L(\phi^R_{L(G), L(G)})) \circ \eta_{R(L(G)) \times R(L(G))} \circ (\eta_G \times \eta_G) \circ d_G \\
&= R(\epsilon_{L(G) \otimes L(G)}) \circ \eta_{R(L(G) \otimes L(G))} \circ \phi^R_{L(G), L(G)} \circ (\eta_G \times \eta_G) \circ d_G \\
&= \phi^R_{L(G), L(G)} \circ d_{R(L(G))} \circ \eta_G,
\end{align*}
where the final equality uses one of the triangular identities. Similarly, one verifies that $R(e_{L(G)}) \circ \eta_G = \phi_0 \circ ! \circ \eta_G$. Hence, by the universal properties of the equalisers and the pullback, there exists a unique morphism
\[
\widehat{\eta}_G \colon G \longrightarrow \mathbb{G}(\widehat{L}(G))
\]
such that $\iota_{\widehat{L}(G)} \circ \widehat{\eta}_G = \eta_G$.

A straightforward verification shows that $\widehat{\eta}_G$ is a morphism of Hopf monoids, using the fact that $\eta_G$ is a morphism in $\mathcal{C}$ and that $\iota_{\widehat{L}(G)}$ is a monomorphism. The naturality of $\widehat{\eta}$ follows from the naturality of $\eta$. Thus $\widehat{\eta} \colon 1_{\mathsf{Hopf}(\mathcal{C})} \Rightarrow \mathbb{G} \circ \widehat{L}$ is a natural transformation.

\subsection*{Step 6: The counit of the adjunction}

We now construct the counit. Let $H$ be a Hopf monoid in $\mathcal{D}$. The counit
\[
\widehat{\epsilon}_H \colon \widehat{L}(\mathbb{G}(H)) \longrightarrow H
\]
is defined as the composition
\[
\widehat{\epsilon}_H := \epsilon_H \circ L(\iota_H) \colon L(\mathbb{G}(H)) \longrightarrow H,
\]
where $\epsilon_H \colon L(R(H)) \longrightarrow H$ is the counit of the adjunction $(L, R)$. Since $\iota_H$ is a morphism of Hopf monoids and $L$ is a braided bistrong monoidal functor, $\widehat{\epsilon}_H$ is a morphism of Hopf monoids. Its naturality follows from the naturality of $\epsilon$ and the functoriality of $\mathbb{G}$.

\subsection*{Step 7: Verification of the triangular identities}

Finally, we verify the triangular identities. For $G \in \mathsf{Hopf}(\mathcal{C})$, we have:
\[
\widehat{\epsilon}_{\widehat{L}(G)} \circ \widehat{L}(\widehat{\eta}_G)
= \epsilon_{L(G)} \circ L(\iota_{\widehat{L}(G)}) \circ L(\widehat{\eta}_G)
= \epsilon_{L(G)} \circ L(\eta_G)
= 1_{L(G)},
\]
by the triangular identity of the original adjunction $(L, R)$. Similarly, for $H \in \mathsf{Hopf}(\mathcal{D})$, using the definition of $\mathbb{G}(\widehat{\epsilon}_H)$ and the fact that $\iota_H$ is a monomorphism, one obtains:
\[
\mathbb{G}(\widehat{\epsilon}_H) \circ \widehat{\eta}_{\mathbb{G}(H)} = 1_{\mathbb{G}(H)}.
\]
Therefore, the triangular identities hold, and $\mathbb{G}$ is indeed the right adjoint of $\widehat{L}$.

This completes the proof.
\end{proof}

%%%%%%%%%%%%%%%%%%%%%%%%%%%%%%%%%%%%%%%%%%%%%%%%%%%%%%%%%%%%%%%%%%%%
\section{Examples}\label{sec:examples}

The following examples show that the theory of Hopf monoids in braided monoidal categories is not an empty formalism, but recovers and generalizes classical constructions. We present four free-forgetful adjunctions, show that they assemble into a commutative diagram, and deduce that the resulting adjunctions between Hopf monoids are independent of the path taken.

\begin{example}[Sets and Vector Spaces]\label{ex:set_vect}
The main motivation for the entire categorical framework is the following: classical Hopf algebras are precisely Hopf monoids in the category of vector spaces, while groups are precisely Hopf monoids in the category of sets.

In the Cartesian monoidal category $(\Set,\times)$, a Hopf monoid is precisely a group: the multiplication $m:G\times G\to G$ is the group operation, the unit $u:\{*\}\to G$ selects the identity, the comultiplication $d:G\to G\times G$ is the diagonal $g\mapsto(g,g)$, the counit $e:G\to\{*\}$ is the unique map to the terminal object, and the antipode $s:G\to G$ is inversion $g\mapsto g^{-1}$. Thus $\mathsf{Hopf}(\Set)\cong\Grp$.

In the symmetric monoidal category $(\VectK,\otimes_K)$, a Hopf monoid is precisely a classical $K$-Hopf algebra. This demonstrates that the categorical framework subsumes the classical theory of Hopf algebras; see \cite{Sweedler} and \cite{Abe} for the foundational theory.

Now consider the free-forgetful adjunction
\[
L_1:\Set\longrightarrow\VectK,\qquad R_1:\VectK\longrightarrow\Set,
\]
where $L_1(X)=K[X]$ is the free $K$-vector space on $X$ and $R_1$ is the underlying-set functor. Since $L_1$ is a braided bistrong monoidal functor, Proposition~\ref{prop:bistrong_preserves_hopf} and Proposition~\ref{prop:grouplike_adjunction} imply that it induces an adjunction
\[
\widehat{L_1}:\mathsf{Hopf}(\Set)\longrightarrow\mathsf{Hopf}(\VectK),\qquad
\mathbb{G}_1:\mathsf{Hopf}(\VectK)\longrightarrow\mathsf{Hopf}(\Set).
\]
Identifying $\mathsf{Hopf}(\Set)\cong\Grp$ and $\mathsf{Hopf}(\VectK)\cong\HopfAlg_K$, this is precisely the classical adjunction
\[
K[-]:\Grp\longrightarrow\HopfAlg_K,\qquad
\mathbb{G}:\HopfAlg_K\longrightarrow\Grp,
\]
where $K[G]$ is the group algebra of $G$ and $\mathbb{G}(H)$ is the group of grouplike elements of $H$. This recovers the classical result that the group algebra functor is left adjoint to the grouplike elements functor.
\end{example}

\begin{example}[Vector Spaces and Chain Complexes]\label{ex:vect_ch}
The same free-forgetful adjunction can be lifted to the differential graded setting. Consider the free-forgetful adjunction between vector spaces and chain complexes:
\[
L_2:\VectK\longrightarrow \mathbf{Ch}(\VectK),\qquad
R_2:\mathbf{Ch}(\VectK)\longrightarrow\VectK,
\]
where $L_2(V)$ is the chain complex concentrated in degree $0$ with $V$ and zero differential, and $R_2(C)=C_0$ is the degree $0$ part. This is an adjunction $L_2\dashv R_2$. The category $\VectK$ is finitely complete, $\mathbf{Ch}(\VectK)$ is symmetric monoidal with the Koszul sign rule, and $L_2$ is braided bistrong monoidal because
\[
L_2(V\otimes W)=V\otimes W\cong L_2(V)\otimes L_2(W),
\]
with the Koszul symmetry reducing to the usual swap since both complexes are concentrated in degree $0$. Thus Proposition~\ref{prop:grouplike_adjunction} applies, yielding an adjunction
\[
\widehat{L_2}:\mathsf{Hopf}(\VectK)\longrightarrow \mathsf{Hopf}(\mathbf{Ch}(\VectK)),\qquad
\mathbb{G}_2:\mathsf{Hopf}(\mathbf{Ch}(\VectK))\longrightarrow \mathsf{Hopf}(\VectK).
\]
Identifying $\mathsf{Hopf}(\VectK)\cong\HopfAlg_K$ and $\mathsf{Hopf}(\mathbf{Ch}(\VectK))\cong \mathbf{DG\text{-}Hopf}_K$, the category of DG-Hopf algebras over $K$, this becomes an adjunction between $\HopfAlg_K$ and $\mathbf{DG\text{-}Hopf}_K$.
\end{example}

\begin{example}[Sets and Simplicial Sets]\label{ex:set_sset}
The free-forgetful adjunction between sets and vector spaces also has a simplicial analogue. Consider the adjunction
\[
L_3:\Set\longrightarrow\mathbf{sSet},\qquad
R_3:\mathbf{sSet}\longrightarrow\Set,
\]
where $L_3(X)$ is the constant simplicial set with $X$ in every degree, and $R_3(K)=K_0$ is the set of $0$-simplices. This is an adjunction $L_3\dashv R_3$. The category $\Set$ is finitely complete, $\mathbf{sSet}$ is Cartesian monoidal, and $L_3$ is braided bistrong monoidal because
\[
L_3(X\times Y)\cong L_3(X)\times L_3(Y)
\]
levelwise. Thus Proposition~\ref{prop:grouplike_adjunction} applies, yielding an adjunction
\[
\widehat{L_3}:\mathsf{Hopf}(\Set)\longrightarrow \mathsf{Hopf}(\mathbf{sSet}),\qquad
\mathbb{G}_3:\mathsf{Hopf}(\mathbf{sSet})\longrightarrow \mathsf{Hopf}(\Set).
\]
Identifying $\mathsf{Hopf}(\Set)\cong\Grp$ and $\mathsf{Hopf}(\mathbf{sSet})\cong\mathbf{sGrp}$, the category of simplicial groups, this becomes an adjunction between $\Grp$ and $\mathbf{sGrp}$.
\end{example}

\begin{example}[Simplicial Sets and Simplicial Vector Spaces]\label{ex:sset_svect}
Finally, consider the free-forgetful adjunction between simplicial sets and simplicial vector spaces:
\[
L_4:\mathbf{sSet}\longrightarrow\mathbf{sVect}_K,\qquad
R_4:\mathbf{sVect}_K\longrightarrow\mathbf{sSet},
\]
where $L_4(X)=K[X]$ is the levelwise free $K$-vector space on $X$, and $R_4$ is the levelwise underlying-set functor. This is an adjunction $L_4\dashv R_4$. The category $\mathbf{sSet}$ is finitely complete, $\mathbf{sVect}_K$ is symmetric monoidal with the levelwise tensor product, and $L_4$ is braided bistrong monoidal because
\[
L_4(X\times Y)\cong L_4(X)\otimes L_4(Y)
\]
levelwise. Thus Proposition~\ref{prop:grouplike_adjunction} applies, yielding an adjunction
\[
\widehat{L_4}:\mathsf{Hopf}(\mathbf{sSet})\longrightarrow \mathsf{Hopf}(\mathbf{sVect}_K),\qquad
\mathbb{G}_4:\mathsf{Hopf}(\mathbf{sVect}_K)\longrightarrow \mathsf{Hopf}(\mathbf{sSet}).
\]
Identifying $\mathsf{Hopf}(\mathbf{sSet})\cong\mathbf{sGrp}$ and $\mathsf{Hopf}(\mathbf{sVect}_K)$ with the category of simplicial $K$-Hopf algebras, this adjunction sends a simplicial group $G$ to the simplicial Hopf algebra $K[G]$ with levelwise group algebra structure, and a simplicial Hopf algebra $H$ to its simplicial group of grouplike elements $\mathbb{G}_4(H)$.
\end{example}

We now recall the classical Dold--Kan correspondence, which identifies simplicial abelian groups with nonnegatively graded chain complexes. This equivalence is the key ingredient that allows us to relate the simplicial adjunctions to the differential graded ones.

\begin{theorem}[Dold--Kan]\label{thm:dold_kan}
There is an equivalence of categories
\[
N:\mathbf{sAb}\longrightarrow \mathbf{Ch}_{\ge 0}(\mathbf{Ab}),\qquad
\Gamma:\mathbf{Ch}_{\ge 0}(\mathbf{Ab})\longrightarrow \mathbf{sAb},
\]
between the category of simplicial abelian groups and the category of nonnegatively graded chain complexes of abelian groups. The functor $N$ is the normalized Moore complex, and $\Gamma$ is its inverse. For a simplicial abelian group $A$, the homology of the normalized chain complex $N(A)$ computes the homotopy groups of $A$, and the correspondence is natural in $A$.
\end{theorem}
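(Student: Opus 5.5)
The statement just displayed is the Dold--Kan correspondence, which is classical. The plan is to construct $\Gamma$ explicitly and prove that $N$ and $\Gamma$ are mutually inverse up to natural isomorphism. We do not use the monoidal machinery of the earlier sections.

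First, for a simplicial abelian group $A$ define
\[
N(A)_n := \bigcap_{i=1}^{n}\ker(d_i\colon A_n\to A_{n-1}),
\]
with differential $d_0$ restricted to $N(A)_n$. The simplicial identity $d_0d_0=d_0d_1$ shows that this is a chain complex, and functoriality in $A$ is immediate. Next, let $D(A)_n\subseteq A_n$ be the subgroup generated by the images of the degeneracies. The key lemma is the splitting
\[
A_n = N(A)_n\oplus D(A)_n,
\]
natural in $A$. We prove it by induction on $n$ using the maps $x\mapsto x - s_id_i x$ (for $i$ descending), which successively kill the faces $d_i$. Iterating this gives the finer decomposition
\[
A_n\cong \bigoplus_{[n]\twoheadrightarrow[k]} N(A)_k,
\]
where the sum runs over the surjections of $\Delta$.

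With this decomposition in hand, define $\Gamma(C)_n := \bigoplus_{[n]\twoheadrightarrow[k]} C_k$. A morphism $\theta\colon[m]\to[n]$ acts on the summand indexed by $\sigma\colon[n]\twoheadrightarrow[k]$ as follows. Take the epi--mono factorisation $\sigma\theta = \delta\tau$. If $\delta=\mathrm{id}$, send the summand identically to the summand indexed by $\tau$. If $\delta=d^0$, apply the differential $\partial$. In all other cases, send it to zero. Checking that $\Gamma(C)$ is a simplicial abelian group reduces to uniqueness of the epi--mono factorisation together with $\partial^2=0$.

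It then remains to exhibit the two natural isomorphisms. The isomorphism $N\Gamma(C)\cong C$ holds because the normalised part of $\Gamma(C)_n$ is exactly the summand indexed by $\mathrm{id}_{[n]}$. The isomorphism $\Gamma N(A)\cong A$ is the map induced by $s_\sigma$ on each summand, and it is bijective by the decomposition above.

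The main obstacle is the splitting lemma, in particular checking that the degenerate part $D(A)_n$ is complementary to $N(A)_n$ and that the summands indexed by distinct surjections are independent. We would first handle small $n$ explicitly and then use the standard normal form of iterated degeneracies, $s_{j_1}\cdots s_{j_r}$ with $j_1>\dots>j_r$, to organise the induction. The homotopy statement is separate: we identify $H_n(NA)$ with the Moore-complex homology, and then with $\pi_n(A)$ via the Kan property of simplicial groups. We cite this last step rather than reprove it.
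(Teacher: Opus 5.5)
The paper does not prove this statement. It quotes Dold--Kan as a classical result, refers to Dold, Kan, Dold--Puppe and Goerss--Jardine, and uses it only to phrase Proposition~\ref{prop:commuting_square}. Your outline is the standard proof from that literature, and most of it is sound:
\begin{itemize}
\item With your normalisation ($\bigcap_{i\ge1}\ker d_i$, differential $d_0$), it is indeed the coface $d^0$ that should carry $\partial$ in $\Gamma$.
\item Functoriality of $\Gamma(C)$ does reduce to uniqueness of epi--mono factorisations plus $\partial^2=0$.
\item $N\Gamma(C)\cong C$ follows once you check that the $\mathrm{id}_{[n]}$-summand is killed by $d_1,\dots,d_n$ and that the other summands are degenerate.
\item Citing $\pi_n(A)\cong H_n(NA)$ puts you on the same footing as the paper.
\end{itemize}

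The one step that fails as written is the projection in the splitting lemma. The faces you must kill are $d_1,\dots,d_n$. But $x\mapsto x-s_id_ix$ is undefined for $i=n$, since there is no $s_n\colon A_{n-1}\to A_n$. For $i<n$ it undoes the previous step: $d_{i+1}s_i=\mathrm{id}$ gives $d_{i+1}(x-s_id_ix)=d_{i+1}x-d_ix$. That formula, run with $i$ ascending, belongs to the other convention $\bigcap_{i<n}\ker d_i$.

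Use instead $P_i(x)=x-s_{i-1}d_ix$ for $i=n,\dots,1$. Then $d_iP_i=0$ because $d_is_{i-1}=\mathrm{id}$. For $j>i$, the identities $d_js_{i-1}=s_{i-1}d_{j-1}$ and $d_{j-1}d_i=d_id_j$ show that $P_i$ preserves $\ker d_j$. The composite $P=P_1\cdots P_n$ fixes $N(A)_n$ and kills each $s_j(A_{n-1})$. Indeed, for $i>j+1$ one has $P_is_j=s_jP'_{i-1}$, with $P'$ the analogous map in degree $n-1$, and then $P_{j+1}s_j=0$. So $P$ gives $A_n=N(A)_n\oplus D(A)_n$ directly, including the complementarity you flagged as the hard part. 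Similarly, showing that $d_0$ maps $N(A)_n$ into $N(A)_{n-1}$ uses $d_id_0=d_0d_{i+1}$ for $i\ge1$, not only $d_0d_0=d_0d_1$.

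Finally, you need not prove the independence of the summands in the fine decomposition by hand.
\begin{itemize}
\item The coarse splitting gives $N(A)_n\cong A_n/D(A)_n$. Since $N$ is both a finite intersection of kernels and a cokernel, it is exact.
\item A simplicial abelian group with $N=0$ vanishes, by induction on $n$.
\item The simplicial map $\Gamma N(A)\to A$ becomes the identity after applying $N$. Its kernel and cokernel therefore have $N=0$, so it is an isomorphism.
\end{itemize}
The decomposition $A_n\cong\bigoplus_{[n]\twoheadrightarrow[k]}N(A)_k$ then follows as a corollary.
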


The Dold--Kan correspondence is treated in detail in the modern reference \cite{GoerssJardine}, where it is developed in the broader context of simplicial homotopy theory. The original sources are the papers of Dold \cite{Dold} and Kan \cite{Kan}, as well as the joint work of Dold and Puppe \cite{DoldPuppe}.

\begin{proposition}[Compatibility of the Free Constructions]\label{prop:commuting_square}
The four adjunctions above assemble into a commutative diagram
\[
\begin{tikzcd}[row sep=large, column sep=large]
\Set \arrow[r, "L_1"] \arrow[d, "L_3"'] & \VectK \arrow[d, "L_2"] \\
\mathbf{sSet} \arrow[r, "L_4"'] & \mathbf{sVect}_K \arrow[r, "N"'] & \mathbf{Ch}(\VectK)
\end{tikzcd}
\]
where $N$ is the normalized Moore complex functor from the Dold--Kan correspondence. In particular, the two compositions
\[
L_2\circ L_1,\qquad N\circ L_4\circ L_3:\Set\longrightarrow\mathbf{Ch}(\VectK)
\]
are naturally isomorphic.
\end{proposition}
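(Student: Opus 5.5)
The plan is to compute both composites explicitly on objects and morphisms and check that they agree on the nose, up to the canonical identifications. Nothing from the Hopf-theoretic part of the paper is needed. The statement concerns only the left adjoints $L_1,\dots,L_4$ and the normalized Moore complex $N$, so it reduces to an elementary computation.

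\textbf{The first composite.} Fix a set $X$. By Example~\ref{ex:set_vect} and Example~\ref{ex:vect_ch}, $L_2(L_1(X))$ is the chain complex with $K[X]$ in degree $0$, zero in all other degrees, and zero differential. For a function $f\colon X\to Y$, the map $L_2L_1(f)$ is $K[f]$ in degree $0$ and zero elsewhere.

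\textbf{The second composite.} By Example~\ref{ex:set_sset}, $L_3(X)$ is the constant simplicial set with value $X$, so all face and degeneracy maps are $1_X$. Applying $L_4$ levelwise (Example~\ref{ex:sset_svect}) gives the constant simplicial vector space with value $K[X]$, whose faces and degeneracies are all $1_{K[X]}$.

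I then apply the normalized Moore complex. The functor $N$ is stated in Theorem~\ref{thm:dold_kan} for simplicial abelian groups, but the same formula defines it on $\mathbf{sVect}_K$, landing in non-negatively graded complexes of vector spaces, which I regard inside $\mathbf{Ch}(\VectK)$. Concretely,
\[
N(A)_n=\bigcap_{i=1}^{n}\ker\bigl(d_i\colon A_n\to A_{n-1}\bigr), \qquad \partial_n = d_0|_{N(A)_n}.
\]
For the constant object $A=L_4L_3(X)$, this gives $N(A)_0=A_0=K[X]$. For $n\ge 1$, $N(A)_n\subseteq\ker(d_1)=\ker(1_{K[X]})=0$. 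Hence $NL_4L_3(X)$ is $K[X]$ concentrated in degree $0$ with zero differential. This is literally the same complex as $L_2L_1(X)$.

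\textbf{Naturality.} For $f\colon X\to Y$, the map $L_4L_3(f)$ is the constant simplicial map $K[f]$. The induced chain map $N(L_4L_3(f))$ is the restriction of $K[f]$ to the normalized subcomplexes, which is $K[f]$ in degree $0$ and $0$ elsewhere. So the identity maps assemble into a natural isomorphism $L_2\circ L_1\cong N\circ L_4\circ L_3$, and in fact an equality once the standard models are fixed.

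\textbf{The only real subtlety.} The one point needing care is the conventions rather than any computation. Theorem~\ref{thm:dold_kan} is stated over $\mathbf{Ab}$, so I must justify using $N$ on $\mathbf{sVect}_K$. I would note that $N$ is defined by the same kernel formula in any abelian category and is $K$-linear, so it restricts to $\mathbf{sVect}_K\to\mathbf{Ch}_{\ge0}(\VectK)\subseteq\mathbf{Ch}(\VectK)$.

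I would also remark that the result does not depend on which normalization convention is used. Taking kernels of $d_0,\dots,d_{n-1}$, or using the quotient by degenerate simplices, also gives zero in positive degrees for a constant simplicial object, because every positive-degree simplex is degenerate.
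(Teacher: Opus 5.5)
Your proposal is correct and takes the same route as the paper: both arguments evaluate the two composites on a set $X$ and observe that each is $K[X]$ concentrated in degree $0$ with zero differential, the key input being that $N$ of a constant simplicial vector space vanishes in positive degrees because $d_1$ is the identity there. You make explicit what the paper leaves implicit (the kernel computation, naturality in $f$, and why $N$ may be applied on $\mathbf{sVect}_K$ rather than $\mathbf{sAb}$). You also rightly avoid the paper's closing remark about the induced Hopf monoid adjunctions, which goes beyond the statement and assumes $N$ is strong monoidal, which fails in general.
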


\begin{proof}
Both compositions send a set $X$ to the chain complex concentrated in degree $0$ with $K[X]$ and zero differential. Indeed, $L_1(X)=K[X]$ and $L_2$ concentrates in degree $0$; similarly, $L_3(X)$ is the constant simplicial set on $X$, $L_4$ gives the levelwise free simplicial vector space $K[X]$, and the normalized Moore complex $N$ of a constant simplicial vector space is the chain complex concentrated in degree $0$ with $K[X]$. The isomorphism is natural in $X$, and since all functors involved are strong monoidal, it extends to an isomorphism of the induced Hopf monoid adjunctions.
\end{proof}

\begin{example}[Path Independence]\label{ex:path_independence}
Combining the adjunctions of Examples~\ref{ex:set_vect}--\ref{ex:sset_svect} with Proposition~\ref{prop:commuting_square}, we obtain an adjunction
\[
\Grp\longrightarrow\mathbf{DG\text{-}Hopf}_K,\qquad
\mathbf{DG\text{-}Hopf}_K\longrightarrow\Grp,
\]
between groups and DG-Hopf algebras over $K$. By Proposition~\ref{prop:commuting_square}, this adjunction is independent of the path taken: whether we go through vector spaces and chain complexes, or through simplicial sets and simplicial vector spaces, the resulting adjunction is the same up to natural isomorphism. This illustrates the robustness of the categorical framework: the classical group algebra--grouplike elements adjunction extends to the differential graded setting in a canonical way.
\end{example}

\begin{example}[Topological groups via discrete and indiscrete topologies]\label{ex:top_groups}
The free-forgetful adjunction \(\Set \rightleftarrows \mathbf{Top}\) with the discrete topology, and the forgetful-indiscrete adjunction \(\mathbf{Top} \rightleftarrows \Set\), both satisfy the hypotheses of Proposition~\ref{prop:grouplike_adjunction}. Applying it yields adjunctions
\[
\Grp \rightleftarrows \mathbf{TopGrp},
\]
where the left adjoint is given by the discrete topology functor in the first case and by the forgetful functor in the second, while the right adjoint is the forgetful functor in the first case and the indiscrete topology functor in the second.
\end{example}

\begin{example}[Groups with $G$-action via the free $G$-set adjunction]\label{ex:g_groups}
Consider the free-forgetful adjunction between sets and $G$-sets,
\[
L:\Set\longrightarrow G\text{-}\mathbf{Set},\qquad
R:G\text{-}\mathbf{Set}\longrightarrow\Set,
\]
where \(L(X)=G\times X\) is the free \(G\)-set on \(X\) and \(R\) is the forgetful functor. Since \(L\) is strong monoidal with respect to the Cartesian product, Proposition~\ref{prop:grouplike_adjunction} applies and yields an adjunction
\[
\Grp \rightleftarrows G\text{-}\mathbf{Grp},
\]
where \(G\text{-}\mathbf{Grp}\) is the category of groups equipped with a \(G\)-action by automorphisms. The left adjoint sends a group \(A\) to the \(G\)-group \(G\times A\) with the diagonal action, while the right adjoint forgets the \(G\)-action. A detailed study of the category \(G\text{-}\mathbf{Grp}\) is given in \cite{ArroyoFacchini}.
\end{example}

\begin{example}[Combinatorial Hopf Algebras from Species]\label{ex:species_vect}
Consider the free-forgetful adjunction between set species and vector species,
\[
L:\mathbf{Spc}\longrightarrow \mathbf{VectSpc}_K, \qquad R:\mathbf{VectSpc}_K\longrightarrow \mathbf{Spc},
\]
where \(L(\mathrm{P})\) is the vector species with \(K\mathrm{P}[S]\) as the vector space on the set \(\mathrm{P}[S]\), and \(R\) is the forgetful functor. Since \(L\) is strong monoidal with respect to the Cauchy product, Proposition~\ref{prop:grouplike_adjunction} applies, yielding an adjunction
\[
\mathsf{Hopf}(\mathbf{Spc}) \rightleftarrows \mathsf{Hopf}(\mathbf{VectSpc}_K).
\]
Moreover, the Fock functor \(\mathcal{K}\) (or its contragredient \(\overline{\mathcal{K}}\)) is bilax monoidal and sends a Hopf monoid in species to a graded Hopf algebra, providing a further adjunction between combinatorial Hopf algebras and graded Hopf algebras. The theory of Hopf monoids in species and the Fock functors is developed in detail by Aguiar and Mahajan \cite{MF}.
\end{example}
%%%%%%%%%%%%%%%%%%%%%%%%%%%%%%%%%%%%%%%%%%%%%%%%%%%%%%%%%%%%%%%%%%%%
\bibliography{biblio}
\bibliographystyle{amsplain}

\end{document}